\newcommand{\eps}{\varepsilon}
\newcommand{\Fi}{\phi}
\newcommand{\bbR}{\mathbb{R}}
\newcommand{\bbZ}{\mathbb{Z}}
\newcommand{\bbN}{\mathbb{N}}
\newcommand{\Pb}{\mathbb{P}}
\newcommand{\om}{\omega}
\newcommand{\Homeo}{\mathrm{Homeo}}
\newcommand{\supp}{\mathop{\mathrm{supp}}}
\newtheorem{Th}{Theorem}
\newtheorem{Lem}{Lemma}
\newtheorem{pro}{Proposition}
\theoremstyle{definition}
\newtheorem{definition}{Definition}
\newtheorem{notation}{Notation}
\newtheorem{rem}{Remark}
\author{A. Gordenko
\thanks{The author's work was partially supported by ANR Gromeov (ANR-19-CE40-0007).} 
\thanks{Univ Rennes, CNRS, IRMAR - UMR 6625, F-35000 Rennes, France.}
}
\title{Random dynamical systems on a real line}
\begin{document}

\maketitle

{\abstract{We study random dynamical systems on the real line, considering each dynamical system together with the one generated by the inverse maps. We show that there is a duality between forward and inverse behaviour for such systems, splitting them into four classes (in terms of both dynamical and stationary measure aspects). This is analogous to the results already known for the smooth dynamics on $[0,1]$, established in terms of the Lyapunov exponents at the endpoints; however, our arguments are purely topological, and thus our result is applicable to the general case of homeomorphisms of the real line.}}

\section{Introduction.}

This paper is devoted to the study of random dynamic systems (RDS) on the real line. That is, we are given a finite number of homeomorphisms $f_1, \dots, f_k \in Homeo^+(\bbR)$ together with the probabilities $p_1, \dots, p_n$ of their application. On each step we apply one of these maps, chosen independently in accordance to these probabilities; the reader will find precise details in Section \ref{def} below.

This work was motivated by the paper of Deroin et al.~\cite{DKNP}, where the authors have considered the case of {\bf symmetric} dynamics, that is, applying any map with the same probability as its inverse. They have shown that in the symmetric case, except for some degenerate situations, there is no \emph{probability} stationary measure (we recall the definition below), though there is an infinite Radon one. At the same time, the symmetric dynamics is always recurrent: there exists a compact interval such that a random orbit, starting from any point, almost surely visits this interval infinitely many times. However, the symmetry in~\cite{DKNP} was used in an essential way and it is interesting to study all the possible types of behavior when this assumption is omitted.

Note, that a change of coordinates transforms $\bbR$ into the interval $(0,1)$. 
The dynamics on the interval and on the real line was studied by many authors, including Guivarc'h, Le Page~\cite{GP}, Deroin, Navas, Parwani~\cite{DKNP}, Ghaeraei, Homburg~\cite{Homb}, Brofferio, Buraczewski, Czudek, Czernous, Damek, Szarek, Zdunik,~\cite{BBD,Sar,CzS,SZ}, Alsed\`a, Misiurewich~\cite{AM}, Kan~\cite{Kan}, Bonifant, Milnor~\cite{BM}, Ilyashenko, Kleptsyn, Saltykov~\cite{IKS}, and many others.

In many of these works, their authors have studied RDS on $(0,1)$ under under additional smoothness (and minimality) assumptions: e.g. in \cite{Homb, IKS, Kan} it is assumed that dynamics is smooth everywhere, in \cite{CS,SZ}~--- at the endpoints. This smoothness assumption has allowed to invoke the technique of the Lyapunov exponent to describe the behaviour at the endpoints.

Namely, it is quite natural to expect --- and the authors, mentionned above, have shown it -- that positive random Lyapunov exponents at the endpoints imply the ``random repulsion'' and thus a probability stationary measure, supported inside the interval. On the other hand, negative Lyapunov exponents imply that the trajectories almost surely tend to endpoints. Finally, \emph{zero} Lyapunov exponents are somewhat close the the positive ones: a random orbit almost surely leaves the neighborhood 
such an endpoint, but the expectation of time to do so is infinite.

The first two types of behaviour are dual to each other in the following sense. Let us denote by $\mu$ the discrete probability measure on $\Homeo_+(\bbR)$ defining the dynamics (that is $\mu(\{f_i\}) = p_i$), and by $\hat \mu$ its image when all the maps are replaced by their inverses, so $\hat \mu(f) = \mu(f^{-1})$. We call the former \emph{forward dynamics}, the latter \emph{inverse}.
If for the forward dynamics the Lyapunov exponents are positive, then for the inverse one they are negative. Also, the inverse dynamics to the one with zero Lyapunov exponent also has zero Lyapunov exponent at that endpoint.  This allows to describe possible behaviours for the forward and backward dynamics, grouping these in quite a few classes.

\subsection{Main results}

In this paper, we show that that such conclusions (and a duality between forward and backward dynamics) can be established with no smoothness assumptions at all, by direct application of purely \emph{topological} methods. In the first result, Theorem~\ref{th:class} below, we show that for a random dynamical system on $\bbR$ the behaviours of forward and inverse dynamics fall into one of four ``dual'' classes.

\begin{Th}\label{th:class}
Assume that RDS on $\bbR$, defined by a finitely supported measure~$\mu$ on $\Homeo_+(\bbR)$ is such that 
\[
\forall x \in \bbR \quad  \exists f, g\in \supp \mu : \quad g(x) < x < f(x).
\]

Then, possibly upon interchanging $\mu$ and $\hat \mu$ and (or) reversing the orientation by a space symmetry $x\mapsto (-x)$, the action falls in exactly one of the following classes:
\begin{enumerate}
\item\label{i:1} in forward dynamics all the points almost surely tend to  $+\infty$, in inverse dynamics all the points almost surely tend to  $-\infty$;
\item\label{i:2} in forward dynamics all the points almost surely tend to  $+\infty$, the inverse dynamics is recurrent (all the points almost surely return to some compact infinitely many times);
\item\label{i:3} both forward and inverse dynamics are recurrent;
\item\label{i:4}  in forward dynamics all the points tend with positive probability to each of $+\infty$ or $-\infty$, the inverse dynamics is recurrent.

\end{enumerate}
\end{Th}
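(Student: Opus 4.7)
My strategy relies on three ingredients: monotonicity of the orientation-preserving homeomorphisms, the marginal distributional identity $F_n^{-1}(y) \stackrel{d}{=} \tilde F_n(y)$ (where $F_n = \xi_n\circ\cdots\circ\xi_1$ with $\xi_i$ i.i.d.\ $\mu$, and $\tilde F_n = \eta_n\circ\cdots\circ\eta_1$ with $\eta_i$ i.i.d.\ $\hat\mu$), and a topological irreducibility flowing from the non-degeneracy hypothesis.

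First, I would classify the forward dynamics. Setting $p_\pm(x) := \Pb(F_n(x) \to \pm\infty)$, the order-preservation makes $p_+$ non-decreasing and $p_-$ non-increasing, while both are $\mu$-harmonic. Non-degeneracy, applied iteratively with a continuity argument, gives that from any $x_0$ the forward orbit reaches any prescribed level with positive probability. The global events $\{\forall x,\ F_n(x) \to +\infty\}$, $\{\exists x,\ F_n(x) \to +\infty\}$ and $\{\text{orbit visits some fixed compact i.o.}\}$ are invariant under finite modifications of $(\xi_i)$ (each such modification acts as a bijection of $\bbR$ on the starting points), and hence have probability $0$ or $1$ by Kolmogorov. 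Combined with irreducibility and monotonicity, this pins $p_+(\pm\infty)$ and the recurrence probability $p_r(x)$ in $\{0,1\}$, and yields exactly four forward classes: (A) $p_+\equiv 1$; (B) $p_-\equiv 1$; (C) orbit a.s.\ recurrent ($p_\pm\equiv 0$); (D) $p_+, p_- > 0$ everywhere with $p_+ + p_- \equiv 1$. A separate lemma rules out unbounded oscillation in the transient case: any sign-change of the orbit requires $F_n$ to lie in a fixed bounded region (determined by $\supp\mu$), so persistent sign-changes are incompatible with $|F_n|\to\infty$.

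Next, I would use the distributional duality to obtain the compatibility constraints between forward and inverse classes. In case (A), monotonicity on the a.s.\ intersection event gives $F_n^{-1}(y) \to -\infty$ a.s.\ for every $y$, hence $\tilde F_n(y) \to -\infty$ in probability. Classifying $\tilde F_n$ independently via the same scheme, the in-probability convergence excludes $\tilde F_n$-classes (A) and (D) (both would force $\Pb(\tilde F_n\geq M)$ bounded away from $0$ uniformly in $n$), so the inverse is in class (B) (the paper's class 1) or class (C) (class 2, consistent with recurrent returns at a vanishing density of times). In case (D), the thresholds $a(\omega)=\inf\{x: F_n(x)\to+\infty\}$ and $b(\omega)=\sup\{x: F_n(x)\to-\infty\}$ are a.s.\ finite, so $F_n^{-1}(y)\in[b(\omega),a(\omega)]$ eventually; tightness of $\tilde F_n$ then forces the inverse into class (C), i.e., class 4. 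Applying the contrapositives (e.g., inverse in class (A) forces forward in (B) or (C), inverse in (D) forces forward in (C)), one concludes that the only possible pairs of (forward, inverse) classes, up to interchange of $\mu$ and $\hat\mu$ and the orientation-reversal $x\mapsto -x$, are exactly (A,B), (A,C), (C,C), (D,C) — the four cases of the theorem.

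The principal obstacles I anticipate are: (i) the separation of classes 1 and 2 from a single forward class (A), where the marginal duality only yields in-probability convergence of $\tilde F_n$, while both a.s.\ convergence and a.s.\ recurrence (with sparse returns) are compatible with this marginal behaviour, so the split must be obtained from an intrinsic classification of $\tilde F_n$; and (ii) the sign-change lemma underpinning the exhaustiveness of the forward classification, whose proof relies essentially on the finiteness of $\supp\mu$ together with continuity of elements of $\Homeo^+(\bbR)$ to argue that any crossing between $\{F_n>0\}$ and $\{F_n<0\}$ happens inside a fixed compact interval.
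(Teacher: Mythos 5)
Your overall architecture matches the paper's: classify the forward dynamics into four classes using monotonicity of $\phi_\pm$ and zero--one laws, then use duality to constrain which pairs (forward, inverse) can occur. The interesting divergence is in the key duality step. Where you handle forward class (D) by observing that the thresholds $a(\omega)=\inf\{x: F_n(x)\to+\infty\}$ and $b(\omega)=\sup\{x:F_n(x)\to-\infty\}$ are a.s.\ finite, deducing that $F_n^{-1}(y)$ is a.s.\ eventually trapped in $[b(\omega),a(\omega)]$, and then using the marginal identity $F_n^{-1}(y)\overset{d}{=}\hat F_n(y)$ to get tightness that kills inverse classes (A), (B), (D) --- the paper instead builds the measure $\nu[x,y]=\phi_+(y)-\phi_+(x)$, checks it is a stationary \emph{probability} measure for the inverse dynamics, and invokes Kakutani's random ergodic theorem to conclude the inverse dynamics is recurrent. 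Your tightness argument is correct and more elementary (no ergodic theorem, no stationary measure), and it is a genuine alternative worth recording; the paper's route has the side benefit of producing the stationary measure that reappears in Theorem~\ref{th:stat}. Your treatment of forward class (A) (a.s.\ convergence of $F_n^{-1}(y)$ to $-\infty$, hence in-probability convergence of $\hat F_n(y)$, excluding inverse classes (A) and (D) but unable to separate (B) from (C)) is essentially identical to the paper's Proposition~5, and your observation that the marginal duality cannot distinguish cases~\ref{i:1} and~\ref{i:2} is exactly right --- both remain as separate cases of the theorem.

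The one place where your sketch is genuinely under-specified is the forward classification itself. Kolmogorov's zero--one law applied to the modification-invariant events $\{\forall x:\ F_n(x)\to+\infty\}$ and $\{\exists x:\ F_n(x)\to+\infty\}$ only pins down $\inf\phi_+$ and $\sup\phi_+$; the crucial assertion that $\phi_+>0$ and $\phi_->0$ everywhere forces $\phi_++\phi_-\equiv 1$ (equivalently, that no individual point oscillates with positive probability, so that class (D) is genuinely disjoint from oscillatory behaviour) concerns the fixed-$x$ event $\{b(\omega)\le x\le a(\omega)\}$, which is \emph{not} a tail event. The paper closes this with a L\'evy zero--one law / martingale-convergence argument (the conditional probability $\phi_0(F_m(x))$ is a bounded martingale that must converge to $0$ or $1$, yet is uniformly bounded away from $1$); some version of that argument, rather than "irreducibility and monotonicity," is what you would need to make this step rigorous. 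Your sign-change lemma identifying $\phi_0\equiv 1$ with recurrence via compact displacement is fine and is the same device the paper uses to justify the finiteness hypothesis.
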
  
Actually, the ``finitely supported'' assumption can be weakened to the ``compact displacements'' one (see Definition \ref{displace}). Moreover, part of the conclusions survive if we drop it completely (see Theorem~\ref{th:fi}). However, the dynamics in the infinitely supported case can behave much nastier. 
Namely, in Section~\ref{monster} we construct a monster, illustrating non-recurring dynamics, that does not tend individually neither to $+\infty$, nor to $-\infty$.

Our second result is devoted to the description of (Radon) stationary measures in the recurrent parts of these cases. The existence part essentially follows the construction in~\cite{DKNP}, however the interesting part is that these measures might be finite, infinite or semi-infinite~--- as well as their relation to the dynamics. Also, note that under an addition assumption of~\emph{proximality} of the action (that is, an arbitrary large interval can be contracted inside a given one), a recent result of Brofferio, Buraczewski and Szarek~\cite[Theorem~1.1]{Sar} implies that the Radon stationary measure is unique.

\begin{Th}\label{th:stat}
Let $\mu$ be a finitely supported probability measure on $\Homeo_+(\bbR)$, satisfying the assumptions of~Theorem~\ref{th:class}. Depending on into which of the four classes, described in Theorem \ref{th:class} does it fall, we have one of the following corresponding conclusions:
\begin{enumerate}
\item Both forward and backward dynamics are non-recurrent.
\item The forward dynamics is non-recurrent. The backward dynamics is recurrent and admits a \emph{semi-infinite} Radon stationary measure: the measure of half-rays to $+\infty$ is finite measure, of half-rays to $-\infty$ is not. This measure can be constructed using hitting probability for the forward dynamics. 
\item Both forward and backward dynamics are recurrent and admit an infinite Radon stationary measure and do not admit neither a probability, nor a semi-infinite one (the same conclusion as in the symmetric case);
\item The forward dynamics is non-recurrent; there is a probability stationary measure for the backward dynamics, its distribution function is the probability for a point to tend to~$+\infty$.
\end{enumerate}
\end{Th}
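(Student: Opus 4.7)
The four cases split as follows: (1) is tautological, (4) is the cleanest construction, and (2)--(3) share a common construction with a different mass analysis.

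Case~(1) is immediate: the class~(1) conclusion of Theorem~\ref{th:class} asserts that forward orbits escape to $+\infty$ and backward orbits to $-\infty$ almost surely, so neither is recurrent. For case~(4), the plan is to show that
\[
F(x) := \Pb_x(X_n \to +\infty)
\]
is the CDF of the sought probability $\hat\mu$-stationary measure. The key properties are: monotonicity via the order-preserving coupling (if $x \le y$ and both orbits are run with the same driving sequence, $X_n(x) \le X_n(y)$, so escape to $+\infty$ at $x$ implies escape at $y$); right-continuity by monotone convergence on the corresponding increasing events; the limits $F(-\infty) = 0$ and $F(+\infty) = 1$ from the class~(4) description (orbits tend to $\pm\infty$ almost surely, both probabilities positive and summing to one); and the harmonicity $F(x) = \sum_i p_i F(f_i(x))$ from the Markov property. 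Testing on half-lines $A = (-\infty, x]$ and using $f_i(A) = (-\infty, f_i(x)]$ (since the $f_i$ are increasing homeomorphisms), this harmonicity is precisely the $\hat\mu$-stationarity $\nu(A) = \sum_i p_i \nu(f_i(A))$ of $\nu := dF$, which extends to all Borel sets by a monotone class argument.

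For cases~(2) and~(3), both requiring a Radon $\hat\mu$-stationary measure on the recurrent backward dynamics, I plan a Kac-style excursion construction. Fix a bounded interval $I_0$; by backward recurrence the return time $\tau_{I_0}$ is a.s.\ finite from every $z_0 \in I_0$, and
\[
\nu(A) := \bbE_{z_0}\Bigl[\sum_{n=0}^{\tau_{I_0}-1} \mathbf{1}_{\hat X_n \in A}\Bigr]
\]
is $\hat\mu$-stationary by the usual excursion argument. The mass on half-rays is then read off via the time-reversal duality for this RDS: the backward-chain path $(\hat X_0, \ldots, \hat X_N)$ and the reversed forward-chain path have the same joint law (a direct computation from $\hat\mu(g)=\mu(g^{-1})$), so the expected time a backward excursion spends in $(x,+\infty)$ is controlled by the forward hitting probability $h(x) := \Pb_x(\text{forward orbit ever hits } I_0)$. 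In case~(2), forward escape to $+\infty$ forces $h(x) \to 0$ as $x \to +\infty$ by a bounded-martingale argument ($h(X_n)$ is a non-negative martingale that must vanish in $L^1$ because $X_n$ eventually leaves any compact), yielding $\nu((x,+\infty)) < \infty$; the infiniteness $\nu((-\infty, x)) = \infty$ follows because a finite total mass would rescale to a probability $\hat\mu$-stationary measure, whose CDF would be a non-constant bounded $\mu$-harmonic function, ruled out in case~(2) since $F(X_n)\to F(+\infty)$ almost surely and the martingale property then force $F$ constant. In case~(3), both dynamics being recurrent, $h$ does not decay at either end, and the same construction produces a measure infinite on both half-rays; the non-existence of probability and semi-infinite stationary measures follows from the two-sided symmetry of recurrence, essentially along the lines of~\cite{DKNP}.

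The principal obstacle is the quantitative matching between the forward hitting probability $h$ and the backward excursion mass in case~(2): both halves of the semi-infinite claim rest on this duality. Carrying it out purely topologically, using only order-preservation and the recurrence dichotomy of Theorem~\ref{th:class} without any drift or smoothness assumption, is the technical core of the proof; the remaining steps reduce to monotone-class arguments and the order-preserving coupling.
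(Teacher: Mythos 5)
There is a genuine gap in your construction for cases (2)--(3). The Kac-style excursion measure
\[
\nu(A) = \bbE_{z_0}\Bigl[\sum_{n=0}^{\tau_{I_0}-1} \mathbf{1}_{\hat X_n \in A}\Bigr]
\]
started from a \emph{single point} $z_0\in I_0$ is not stationary: the standard computation gives $\nu P = \nu - \delta_{z_0} + \mathrm{Law}(\hat X_{\tau_{I_0}})$, so stationarity requires $z_0$ to be an atom of the return chain, which it is not here. To repair this you must start the excursion from a measure $\nu_0$ on $I_0$ invariant for the induced (return) chain, and the existence of such a $\nu_0$ is itself nontrivial because the return map to a hard interval is not Feller. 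The paper avoids this entirely by following~\cite{DKNP}: it stops the walk with a \emph{smooth randomized} stopping probability $\psi$ (so the law of the stopping point depends continuously on the initial point), applies Kryloff--Bogolyubov, and then exhausts $\bbR$ by nested $\psi$'s with a compatible normalization. Your second acknowledged obstacle --- the quantitative duality needed for the semi-infiniteness in case (2) --- is also not carried out, and the paper sidesteps it with a direct construction: it sets $\psi_y(x)=\Pb(\min_n F_n(x)<y)$ for the \emph{forward} dynamics, observes that $\psi_y$ is harmonic on $(y,+\infty)$ and $\equiv 1$ below $y$, defines $\hat\nu_y([x,+\infty))=\psi_y(x)$ (automatically finite on right half-rays since $\psi_y\le 1$), normalizes on $[0,+\infty)$, and takes a weak accumulation point as $y\to-\infty$. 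So the objects you guessed (forward hitting probabilities) are indeed the right ones, but they should be used to \emph{define} the measure rather than to estimate an excursion measure.

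The rest of your proposal matches the paper. Case (1) is immediate in both. For case (4) your argument is exactly the paper's Proposition~\ref{p:finite}: the CDF is $\phi_+$, and harmonicity plus order-preservation gives $\hat\mu$-stationarity; the converse (uniqueness and the non-existence statements in case (3)) is handled in the paper by the bounded- and positive-martingale convergence arguments of Propositions~\ref{p:finite} and~\ref{p:semi}, which you sketch correctly for case (2) but only gesture at for case (3) via~\cite{DKNP}; in case (3) one needs both ``only if'' directions (a probability stationary measure forces $\phi_\pm>0$, a semi-infinite one forces $\phi_+\equiv1$, and both contradict $\phi_0\equiv1$).
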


Grouping the conclusions of Theorems~\ref{th:class} and~\ref{th:stat}, we get Table~\ref{t:properties}.
\begin{table}[!h]
\begin{tabular}{|c|c|c|}
\hline
no. & Forward dynamics & Backward dynamics \\
\hline
\ref{i:1} & Everything tends to $+\infty$  & Everything tends to $-\infty$ \\
\hline
\ref{i:2} & Everything tends to $+\infty$  & The dynamics is recurrent and admits \\
& &  a semi-infinite stationary measure \\
\hline
\ref{i:3} & The dynamics is recurrent and admits & 
The dynamics is recurrent and admits \\
&  an infinite stationary measure & an infinite stationary measure\\
\hline
\ref{i:4} & Every point tends to $+\infty$ or to $-\infty$,   & The dynamics is recurrent and admits  \\
& to both with positive probabilities & a probability stationary measure \\
\hline
\end{tabular}
\caption{Possible cases for the dynamics}\label{t:properties}
\end{table}

\subsection{Plan of the paper}

We introduce the notations and recall the definitions in Section~\ref{def}. Then, in Section~\ref{fis}, we study the property of the functions $\phi_{+}$ and $\phi_-$, giving the probability for the images of the initial point~$x$ to tend to $+ \infty$ and $-\infty$ respectively. We then apply it in Section~\ref{2dyn} to study the possible behaviours for forward and backward dynamics simultaneously. Section~\ref{monster} is devoted to the construction of the monster example with points evading to infinity while oscillating between plus and minus infinities. Finally, Section~\ref{measures} is devoted to the constructions and study of stationary measures.

\section{Definitions and notation.}\label{def}

Let $\mathcal{F} = {f_1, f_2, \dots}$ be a finite (or infinite) set of <<sample>> elements of $Homeo^+(\bbR)$ with a probability measure $\mu$ on it. Let $\{g_n\}_{n=1}^{\infty}$ be a sequence of i.i.d. random variables, taking values in $\mathcal{F}$ and distributed in accordance with measure $\mu$. In finite (and countable) case it is convenient to have special notations for elementary probabilities; we'll denote these 
\[
\Pb(g_n = f_k) = \mu(f_k) := p_k.
\]

Consider the probability space $\Omega := (F^{\bbN}, \mu^{\otimes \bbN})$; in these terms, $g_n$ is a $n$th coordinate of $\omega \in \Omega$. Set
$$
F_n = F_{n,\om} = g_n \circ \dots \circ g_1,
$$
the left random walk on group $G = \langle \mathcal{F} \rangle$. Finally, 
$$
X_n(x) = F_n(x)
$$
is the Markov chain, defined for any $x \in \bbR$.

All above defines the RDS, to which we would refer as \emph{forward} from now~on. 

The \emph{inverse} dynamics is defined in the same way for $\hat {\mathcal{F} }= \{f_1^{-1}, f_2^{-1}, \dots\}$, with the corresponding measure $\hat \mu$ defined by
\begin{equation} \label{meas-trans}
\hat \mu (f_k^{-1}) = \mu(f_k) = p_k.
\end{equation}
It is convenient to add to the considered $\mathcal{F}$ the set of all $f_k^{-1}$ (with $\mu(f_k) = 0$ for any $f_k$ that wasn't there originally) for it to become more symmetric. This allows us to rewrite \eqref{meas-trans} as
\begin{equation} \label{meas-trans1}
\hat \mu (f_k) = \mu(f_k^{-1}).
\end{equation}

Intuitively, we can think of the inverse dynamics in three different ways. First is quite direct: instead of each $f_k$ we've taken its inverse $f_k^{-1}$, thus it is indeed {\it inverse} dynamics. Second, assuming that the set~$\mathcal{F}$ already contains each map together with its inverse, it is not changed by this <<inversification>>, but the probabilities are swapped between each $f_i$ and $f_i^{-1}$ (thus making it the dynamics with the same generating set, but with another, <<inverted>>, measure~$\mu$). Thirdly, we may think of the inverse dynamics as of the forward one with inverted time, thus making it a very natural object to investigate. However, one must note that if for a fixed time $n$ the law of $F_n$ for the inverse dynamics coincides with the law of inverse maps of $F_n$ for the forward one, their \emph{evolution} does not (as the order is composition is also inverted by the passing to the inverse).

As noted previously, we do not ask much of any $f_k$. However, we expect the whole RDS to hold the following property.

\begin{definition}\label{unbound}
We call the point $x \in \bbR$ \emph{shiftable}, if for any $a\in \bbR$ there're exist $k \in \bbN$ such that probabilities $\Pb(F_k(x) < x - a)$ and $\Pb(F_k(x)> x + a)$ are non-zero. Commonly speaking, it means that we can move $x$ arbitrarily far to the left and to the right with non-zero probability in finite amount of time. We say that RDS has the \emph{shiftability} property if any point $x\in \bbR$ is shiftable. 

It is equivalent to the existence of $f_{i_1}$ and $f_{i_2}$ in $\mathcal{F}$  for any fixed $x$ such that $f_{i_1}(x) < x < f_{i_2}(x)$. In work \cite{Sar} this is called  \emph{unboundedness}.
\end{definition}

Now we prove the following auxiliary result:
\begin{Lem}\label{limits}
Let $(\mathcal{F}, \mu)$ be RDS with shiftability property. Then for any $x\in \bbR$ with probability~1 the limits $\limsup_{n\to\infty}{F_n(x)}$ and $\liminf_{n\to\infty}{F_n(x)}$ are infinite.
\end{Lem}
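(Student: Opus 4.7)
The plan is to prove, by contradiction, that $\Pb(\limsup F_n(x) \in \bbR) = 0$; the corresponding statement for $\liminf$ follows by the same argument applied to the left half of the shiftability hypothesis. Assume on the contrary that this probability is positive. Then for some rationals $a < b$ the event $A := \{a \le \limsup F_n(x) \le b\}$ satisfies $\Pb(A) > 0$. On $A$, the orbit $\{F_n(x)\}$ visits the compact set $K := [a-1,b+1]$ infinitely often while eventually staying strictly below $b+2$.

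The key technical step is a \emph{uniform} version of shiftability on $K$: I claim there exist $N \in \bbN$ and $q > 0$ such that for every $y \in K$,
\[
\Pb\bigl( F_j(y) > b+2 \text{ for some } 1 \le j \le N \bigr) \ge q.
\]
Iterating pointwise shiftability gives, for any $y$ and any $M \in \bbR$, a finite word $h_y = f_{i_{k_y}} \circ \dots \circ f_{i_1}$ with letters in $\supp \mu$ such that $h_y(y) > M$; indeed the supremum of values reachable from $y$ by such words is either $+\infty$ or some finite $Z$, but in the latter case shiftability at $Z$ together with continuity produces a word reaching beyond $Z$, a contradiction. Taking $M = b+2$, pick such a word $h_y$ for each $y \in K$; since $h_y$ is a homeomorphism, some open neighborhood $U_y \ni y$ still satisfies $h_y(U_y) \subset (b+2,+\infty)$. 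Extract a finite subcover $U_{y_1},\dots,U_{y_m}$ of $K$, set $N := \max_i k_{y_i}$, and let $q$ be the minimum over $i$ of the probability (under $\mu^{\otimes k_{y_i}}$) of realizing the word $h_{y_i}$ in the first $k_{y_i}$ i.i.d.\ draws; this is positive because $\mu$ is finitely supported and every letter of each $h_{y_i}$ lies in $\supp \mu$.

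With the uniform bound in hand, define stopping times $\tau_1 < \tau_2 < \dots$ inductively by $\tau_1 := \inf\{n \ge 1 : F_n(x) \in K\}$ and $\tau_{\ell+1} := \inf\{n > \tau_\ell + N : F_n(x) \in K\}$. On $A$ all $\tau_\ell$ are a.s.\ finite and $F_{\tau_\ell}(x) \in K$. The strong Markov property applied at $\tau_\ell$ together with the uniform claim yields
\[
\Pb\bigl( F_j(x) > b+2 \text{ for some } \tau_\ell < j \le \tau_\ell + N \,\big|\, \Sigma_{\tau_\ell} \bigr) \ge q
\]
on $\{\tau_\ell < \infty\}$, where $\Sigma_{\tau_\ell}$ denotes the stopped $\sigma$-algebra. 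The conditional second Borel--Cantelli lemma then forces $F_n(x) > b+2$ for infinitely many $n$ almost surely on $A$, contradicting $\limsup F_n(x) \le b$ on $A$.

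The main obstacle I expect is the uniform shiftability step: pointwise shiftability produces shifting words whose length and form depend on the starting point with no \emph{a priori} control, and it is only the combination of continuity of each finite composition, compactness of $K$, and finiteness of $\supp \mu$ (which gives a uniform positive lower bound on the probabilities of the selected words) that upgrades the pointwise statement into the required uniform one.
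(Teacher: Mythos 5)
Your proof is correct. The core idea coincides with the paper's: on the bad event the orbit visits a compact set infinitely often, from each such visit there is a uniformly positive probability of being pushed above $b$ within a bounded number of steps, and infinitely many such chances force the escape to happen almost surely. The two places where you diverge are worth comparing. First, the machinery for converting ``infinitely many chances of size $\ge q$'' into ``it happens a.s.'': you use stopping times at well-separated returns to $K$, the strong Markov property, and the conditional Borel--Cantelli lemma, whereas the paper uses the L\'evy $0$--$1$ law directly --- the conditional probability of the bad event $A_k$ given the first $m$ letters must converge a.s.\ to $\mathds{1}_{A_k}$, yet is bounded above by $1-p$ for all $m$, so $\Pb(A_k)=0$. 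Both are sound; the paper's version avoids introducing stopping times at the cost of a slightly more delicate ``can never exceed $1-p$'' verification. Second, and more substantively: the step you single out as the main obstacle --- upgrading pointwise shiftability to a uniform statement on $K$ via continuity, compactness, and a finite subcover --- is not actually needed here. All generators are orientation-preserving, hence increasing, homeomorphisms, so a \emph{single} word $G$ with $G(a-1)>b+2$ (which the definition of shiftability hands you directly, since it asserts that any point can be moved arbitrarily far with positive probability in finitely many steps) satisfies $G(y)\ge G(a-1)>b+2$ for \emph{every} $y\ge a-1$ simultaneously; this is exactly how the paper gets uniformity in one line, with $N$ the length of $G$ and $q$ its probability. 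Your compactness argument is valid and would survive in settings without monotonicity, but it is heavier than necessary and is the only reason you invoke finite support of $\mu$; note that the lemma is stated (and used) for countably supported measures as well, for which every letter of a fixed finite word still has positive mass, so you should not restrict to the finitely supported case.
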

\begin{proof}
Let us show by contradiction that for every finite interval $I\subset \bbR$ the probability that the upper limit $\limsup_{n\to\infty} F_{n}(x)$ takes value in $I$ is equal to~$0$. Indeed, assume the contrary, that for some $x\in \bbR$ and $I=(a,b)$ the probability of the event 
\[
A=\{\omega\in\Omega \mid \limsup{F_{n,\omega}(x)} \in I\}
\] 
is strictly positive. Note that $A\subset \bigcup_k A_k$, where 
\[
A_k = \{ \omega \mid \forall n\ge k \quad F_n(x) < b , \quad \text{and } \, F_n(x)> a \, \text{ infinitely often} \}
\].
Hence, for some $k$ the event $A_k$ has a positive probability. We fix such~$k$.

Now, the shiftability property implies that there exists a composition $G = f_{i_l} \circ \dots \circ f_{i_1}$ such that $G(a)>b$. Such a composition of length $l$ has a positive probability $p$ to be applied at every moment, including one of the moments when the image $F_n(x)$ enters $I$. The arguments below is a way of formalising the following idea. At each moment when $F_n(x)\in I$, the chance to apply $G$ is at least $p$, and if there is an infinity of such moments, there should be also an infinity of moments when $G$ is applied afterwards, bringing the image $F_{n+l}(x)=G(F_n(x))$ above~$b$.

To proceed formally, consider the conditional probabilities of the event $B= A_k$ 
with respect to the growing cylinders generated by the first $m$ applied maps $g_1,\dots,g_m$, 
\begin{equation}\label{prob-B}
\Pb(B \mid g_1,\dots,g_m).
\end{equation}
Due to a general statement from the measure theory, the \emph{conditional} probabilities of an event~$B$ w.r.t. a growing family of cylinders generating the $\sigma$-algebra converge to $0$ or to $1$ almost surely, and the probability of tending to $1$ equals to $\Pb(B)$. The convergence follows from the martingale convergence theorem (as such conditional probabilities form a martingale), and the values 0 or 1 follow from the fact that every event can be approximated by a cylindrical one up to an arbitrarily small measure. This statement is also an analogue of the statement that almost every point of a measurable set is its Lebesgue density point.

However, such conditional probability can never exceed $(1-p)$ (whatever the values of $k$, $m$ and $g_1,\dots,g_m$ are), as after the first time the iteration $F_n(x)$ visits $(m,+\infty)$ with $n\ge \max(m,k)$ the probability that the next $l$ applied maps correspond to the map $G$ is at least~$p$, and for every such $\om$ for the image $F_{n+l,\om}(x)$ we have 
\[
F_{n+l,\om}(x) = G(F_{n,\om}(x)) > G(a)>b,
\]
thus such $\om$ does not belong to~$A_k$.

Hence, the conditional probability~\eqref{prob-B} converges to zero almost surely, and hence $\Pb(B)=0$. This contradiction proves that the probability that the upper limit takes a value in any finite interval vanishes, and thus this limit is almost surely equal to $+\infty$ or $-\infty$. 

The second statement of the lemma is proved analogously.
\end{proof}

 In the statements of Theorems~\ref{th:class} and~\ref{th:stat} we assume the set of generating maps $\mathcal{F}$ to be finite. As we will see in Sec.~\ref{monster}, this finiteness assumption cannot be dropped completely; however, it can be weakened to the following one (it is easy to see that this is actually the assumption used in their proofs).
\begin{definition} \label{displace}
A random dynamical system, generated by a measure $\nu$ on $Homeo_+(\bbR)$, has \emph{compact displacement} property, if for any $x\in \bbR$ its image $\{f(x)| f\in \supp \mu\}$ is contained in some compact interval. 
\end{definition}
\begin{rem}
This property holds automatically if $\nu$ is supported on some compact in $Homeo_+(\bbR)$, where the space of homeomorphisms is equipped with the topology of uniform convergence on the compacts of both~$f$ and~$f^{-1}$. 
\end{rem}

The main means to study RDS we're going to use throughout the first half of the paper, is to look at the behaviour of the points. Therefore we introduce the following functions, which allow us to do it in simpler terms.

\begin{notation}
Let us define
$$
\Fi_+(x) := \Pb(\lim_{n\rightarrow  \infty} F_n(x) = +\infty),
$$
$$
\Fi_- (x):= \Pb(\lim_{n \rightarrow  \infty} F_n(x) = -\infty),
$$
$$
\Fi_0 (x):= 1 - \Fi_+(x) - \Fi_-(x).
$$
\end{notation}

The first and the second are the probabilities of the events 'the iterations of $x$ tends to $+\infty$', 'the iterations of $x$ tends to $-\infty$'. The third one is the probability that the images of $x$ do not tend neither to $+\infty$, nor $-\infty$, and due to the Lemma \ref{limits} this is the same as the probability of 
\[
\limsup_{n\to\infty}{F_n(x)} = +\infty, \quad \liminf_{n\to\infty}{F_n(x)} = -\infty
\]
(oscillation behaviour). For a finitely generated RDS, this is equivalent to 'there exist an interval that $F_n(x)$ visits inifinitely many times'. In the infinite case it is not true: in Section \ref{monster} we present a counter-example.

$\hat \Fi_+, \hat \Fi_-$ and  $\hat \Fi_0$ are defined in the same manner for $\hat \mu$.

Now we can reformulate Theorem \ref{th:class} in terms of $\Fi_{\pm, 0}$.

\begin{Th}\label{th:fi}
For a pair of forward and inverse RDS with shiftability one of the following is true (perhaps, after the change of coordinate $x \rightarrow -x$ and/or inchanging $\mu$ and $\hat \mu$):
\begin{enumerate}
\item $\Fi_+ \equiv 1$, $\hat \Fi_- \equiv 1$;
\item $\Fi_+ \equiv 1$, $\hat \Fi_0 \equiv 1$;
\item $\Fi_0 \equiv 1$, $\hat \Fi_0 \equiv 1$;
\item $\Fi_0 \equiv 0$, $\Fi_+$  and $\Fi_-$ are not constant, $\hat \Fi_0 \equiv 1$.
\end{enumerate}
\end{Th}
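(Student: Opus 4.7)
The plan rests on three ingredients: the monotone martingale structure of~$\Fi_+$, Kolmogorov's 0--1 law on~$\Omega$, and the time-reversal identity $\Pb(\hat F_n(y)>x)=\Pb(F_n(x)<y)$, valid for each fixed~$n$ because $\hat F_n\stackrel{d}{=}F_n^{-1}$.

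First I would record that $\Fi_+$ is non-decreasing (since the $g_i$ preserve order, $x<y$ gives $\{F_n(x)\to+\infty\}\subset\{F_n(y)\to+\infty\}$) and $\mu$-harmonic (by the Markov property). Hence $M_n:=\Fi_+(F_n(x))$ is a bounded martingale, converging almost surely and in $L^1$. Let $a:=\lim_{x\to+\infty}\Fi_+(x)$ and $b:=\lim_{x\to-\infty}\Fi_+(x)$. By Lemma~\ref{limits}, $\Omega$ splits a.s.\ into $\{F_n\to+\infty\}$, $\{F_n\to-\infty\}$ and the recurrent event $R_x$ (of probability $\Fi_0(x)$), on which $\limsup F_n=+\infty$, $\liminf F_n=-\infty$. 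On the first two events $M_n\to a$ and $M_n\to b$ respectively, while on $R_x$, by monotonicity of $\Fi_+$, $\limsup M_n=a$ and $\liminf M_n=b$. A.s.\ convergence of~$M_n$ therefore forces $a=b$ whenever $\Fi_0(x_0)>0$ for some $x_0$, so $\Fi_+$ is constant in that case.

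Next I would upgrade constancy of $\Fi_+$ to the values $\{0,1\}$. If $\Fi_+\equiv c$, then for $x<y$ the inclusion $\{F_n(x)\to+\infty\}\subset\{F_n(y)\to+\infty\}$ is between two events of the same probability~$c$, hence an a.s.\ equality; so the event $A^+:=\{F_n(x)\to+\infty\}$ depends on~$\omega$ but not on the starting point~$x$ modulo null sets. Conditioning on $g_1=h$ reduces $A^+$ to the same event for the walk $(g_2,g_3,\dots)$ started at $h(x)$, and by independence from the starting point this event does not depend on~$h$; iterating shows $A^+$ is independent of $g_1,\dots,g_k$ for every~$k$. It is thus a tail event of the i.i.d.\ sequence $(g_n)$, and Kolmogorov's 0--1 law gives $c\in\{0,1\}$. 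The same works for~$\Fi_-$. Combined with $\Fi_++\Fi_-+\Fi_0\equiv 1$, this yields the dichotomy: either $\Fi_0\equiv 1$, or $\Fi_0\equiv 0$ and we are in one of the three sub-cases $\Fi_+\equiv 1$, $\Fi_-\equiv 1$, or both $\Fi_\pm$ non-constant. In the last sub-case the $L^1$-identity $a\Fi_+(x)+b(1-\Fi_+(x))=\Fi_+(x)$ forces $a=1$, $b=0$ (otherwise $\Fi_+$ would be constant). The same classification applies to~$\hat\mu$.

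It remains to match the forward and backward classes through the duality. If $\Fi_+\equiv 1$, the right-hand side of $\Pb(\hat F_n(y)>x)=\Pb(F_n(x)<y)$ tends to $0$ for all $x,y$; hence $\hat F_n(y)\to-\infty$ in probability, and Lemma~\ref{limits} upgrades this to $\hat\Fi_+\equiv 0$. The $\hat\mu$-dichotomy then leaves only $\hat\Fi_-\equiv 1$ (Case~1) or $\hat\Fi_0\equiv 1$ (Case~2), since the non-constant sub-case is incompatible with $\hat\Fi_+\equiv 0$. In the non-constant forward sub-case, dominated convergence on the partition $\{F_n\to+\infty\}\sqcup\{F_n\to-\infty\}$ gives $\Pb(F_n(x)<y)\to 1-\Fi_+(x)$, which vanishes as $x\to+\infty$ because $a=1$; any $\hat\Fi_+(y)>0$ would force $\liminf_n\Pb(\hat F_n(y)>x)\ge\hat\Fi_+(y)>0$ for every $x$, a contradiction, so $\hat\Fi_+\equiv 0$ and symmetrically $\hat\Fi_-\equiv 0$, i.e.\ $\hat\Fi_0\equiv 1$ (Case~4). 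Finally, $\Fi_0\equiv 1$ paired with the $\hat\mu$-classification either gives Case~3 directly, or, after swapping $\mu\leftrightarrow\hat\mu$ and possibly the reflection $x\mapsto-x$, falls into one of the cases already handled. I expect the main obstacle to be the 0--1 step: the martingale argument yields only constancy of $\Fi_+$, and promoting this to $\Fi_+\in\{0,1\}$ requires the tail-event argument above to be set up carefully; the subsequent duality estimates are clean once one keeps strict track of the roles of $x$ and $y$ in $\Pb(\hat F_n(y)>x)=\Pb(F_n(x)<y)$.
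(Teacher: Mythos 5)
Your proof is correct, and in two places it takes a genuinely different route from the paper. For the classification of the forward dynamics, the paper proceeds through a chain of statements (Propositions~\ref{prop1}, \ref{lem1}, \ref{lem2} and Lemma~\ref{prop2}) all driven by one mechanism: the conditional probabilities of an event with respect to growing cylinders converge a.s.\ to $0$ or $1$, and cannot converge to $1$ if they are uniformly bounded away from $1$. You instead split the work into (i) a.s.\ convergence of the bounded martingale $\Fi_+(F_n(x))$, which forces $\lim_{+\infty}\Fi_+=\lim_{-\infty}\Fi_+$ as soon as oscillation has positive probability, and (ii) a tail-event/Kolmogorov $0$--$1$ argument promoting constancy of $\Fi_+$ to $\Fi_+\in\{0,1\}$; this bypasses Lemma~\ref{prop2} entirely and is equivalent in content, just packaged differently (the $0$--$1$ step is the one that needs care, and your shift-invariance-mod-null argument for $A^+$ does handle it, using countability of $\supp\mu$). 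The more substantial divergence is in ruling out the case where $\Fi_\pm$ and $\hat\Fi_\pm$ are all non-constant: the paper observes that $\nu[x,y]=\Fi_+(y)-\Fi_+(x)$ is a probability stationary measure for the inverse dynamics and invokes Kakutani's random ergodic theorem to deduce $\hat\Fi_0\equiv 1$, whereas you extract the boundary values $a=1$, $b=0$ from the $L^1$-identity for the martingale and then run the same duality estimate $\Pb(\hat F_n(y)>x)=\Pb(F_n(x)<y)$ that both you and the paper already use for the $\Fi_+\equiv 1$ case. Your version is more elementary (no ergodic theorem) and unifies Cases~2 and~4 under one estimate; the paper's version has the side benefit of exhibiting $d\Fi_+$ as a stationary measure, which is then reused in the proof of Theorem~\ref{th:stat}. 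Minor quibble: in the $\Fi_+\equiv 1$ step, Lemma~\ref{limits} is not what upgrades convergence in probability to $\hat\Fi_+\equiv 0$ --- the inequality $\liminf_n\Pb(\hat F_n(y)>x)\ge\hat\Fi_+(y)$, which you state correctly in the Case~4 paragraph, already does the job.
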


Finally, recall the definition of a stationary measure:
\begin{definition}
A measure $\nu$ on $\bbR$ is called \emph{stationary} for the RDS $\langle \mathcal{F}, \mu \rangle$ with finite $\mathcal{F}$ if
\begin{equation}\label{eq:stat}
\nu = \sum_{i=1}^k p_i (f_i)_* \nu
\end{equation}
where $f_*\nu$ is the push-forward of the measure $\nu$ by the map $f$ (that is, $(f_* \mu)(A)=\mu(f^{-1}(A)$ for all Borel sets $A$).
\end{definition}

This definition is naturally generalized for the random dynamics generated by some probability measure $\mu$ on $Homeo_+(\bbR)$: 
\begin{definition}
A measure $\nu$ is \emph{stationary} for the corresponding RDS, if 
$$
\nu = \int (f_* \nu)  d\mu(f),
$$
or, equivalently, if for any Borel set $A\subset \bbR$ one has
$$
\nu(A) = \int \nu(f^{-1}(A))  \, d\mu(f).
$$
\end{definition}

This definition is also equivalent to the invariance of the measure $\mu^\bbN \times \nu$ for the skew product over the one-sided Bernoulli shift, but we will not use this here.

\section{Properties of $\Fi_+$ and $\Fi_-$}\label{fis}

In this section we study properties of functions $\Fi_+$, $\Fi_-$ and $\Fi_0$ on their own, without any relation to inverse dynamics. The reasoning holds for both finite and infinite RDS with shiftability property.

First, note that $\Fi_+$ and $\Fi_-$ are monotonous. Indeed, if for some $\omega \in \Omega$ $F_n(x)$ goes to $+ \infty$, then (as all our homeomorphisms preserve orientation) for any $y>x$ and any $n \in \bbN$  its image $F_n(y) \ge F_n (x)$ and thus also tends to $+\infty$. So $\Fi_+$ is non-decreasing. Similarly, $\Fi_-$ is non-increasing.

Next proposition states that either every point tends to~$+\infty$ (or, similarly,~$-\infty$), or the probability to go there vanishes at~$-\infty$ (correspondingly, at~$+\infty$).

\begin{pro}\label{prop1}
If there exists $\varepsilon>0$ such that for all $x \in \bbR: \Fi_+(x) \geqslant \varepsilon$, then $\Fi_+(x)\equiv 1.$

Symmetrically, if for some $\varepsilon>0$ all  $x \in \bbR: \Fi_-(x) \geqslant \varepsilon$, then $\Fi_-(x)\equiv 1.$
\end{pro}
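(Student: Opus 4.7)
The plan is to adapt the same martingale / L\'evy 0-1 law argument used at the end of the proof of Lemma~\ref{limits}. The key observation is that, by the Markov property of the random walk, the quantity $\Fi_+(F_n(x))$ is precisely the conditional probability of the tail event
\[
A_+ := \Bigl\{\lim_{n\to\infty} F_n(x) = +\infty\Bigr\}
\]
with respect to the $\sigma$-algebra $\mathcal{F}_n := \sigma(g_1,\dots,g_n)$. Indeed, conditionally on $\mathcal{F}_n$ the value $F_n(x)$ is deterministic, while the tail $F_{n+k}(x) = g_{n+k}\circ\cdots\circ g_{n+1}(F_n(x))$ is obtained by applying to $F_n(x)$ an independent i.i.d.\ sequence with the same law as the original one. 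Hence, for every $n$,
\[
\Pb(A_+ \mid \mathcal{F}_n) = \Fi_+(F_n(x)) \quad \text{a.s.}
\]

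Next, I would invoke L\'evy's upward theorem (the martingale convergence statement cited in the proof of Lemma~\ref{limits}): the conditional probabilities $\Pb(A_+\mid\mathcal{F}_n)$ converge almost surely to the indicator $\mathbf{1}_{A_+}$. Combined with the identity above this yields
\[
\Fi_+(F_n(x)) \xrightarrow[n\to\infty]{\text{a.s.}} \mathbf{1}_{A_+} \in \{0,1\}.
\]
The uniform assumption $\Fi_+\ge\varepsilon$ forces the limit to satisfy $\mathbf{1}_{A_+}\ge\varepsilon>0$ almost surely; since it can only equal $0$ or $1$, it must equal $1$ almost surely. We conclude $\Fi_+(x)=\Pb(A_+)=1$ for every $x\in\bbR$. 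The analogous statement for $\Fi_-$ follows by the same argument (or by conjugating the dynamics by the orientation-reversing map $x\mapsto -x$).

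The only point requiring a careful word is the identity $\Pb(A_+\mid\mathcal{F}_n)=\Fi_+(F_n(x))$: one has to split the trajectory into its first $n$ steps and its independent tail and then identify the conditional law of the tail with that of a fresh random walk started from the (now $\mathcal{F}_n$-measurable) point $F_n(x)$. This is standard, but worth spelling out. Everything else is a short chain of implications, and the proof uses neither the finite-support nor the compact-displacement hypothesis on $\mu$; only the Markov structure of the process and the uniform lower bound on $\Fi_+$ are needed.
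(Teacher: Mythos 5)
Your proof is correct and follows essentially the same route as the paper: the paper also identifies the conditional probability of the tail event (it works with the complement, $\{F_n(x)\not\to+\infty\}$, whose conditional probability is $1-\Fi_+(F_m(x))\le 1-\eps$) and invokes the same martingale convergence / 0--1 argument to force the limit to be $1$ almost surely. The only cosmetic difference is that you condition on $A_+$ directly rather than on its complement.
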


\begin{proof}
Consider the event $A\subset \Omega$, stating that the iterations starting from the initial point $x$ do not tend to~$+\infty$:
\[
A =\{\omega\in \Omega \mid F_{n,\om}(x)\not\to + \infty, \quad n\to \infty\}.
\]
Take the conditional probabilities of this event with respect to the growing cylinders $g_1,\dots,g_m$. On one hand, due to the Markovian property such conditional probability equals to the probability that the iterations of the image point $F_m(x)=g_m\circ \dots \circ g_1(x)$ do not tend to $+\infty$: 
\begin{equation}\label{not-plus}
\Pb(A\mid g_1,\dots,g_m) = 1-\Fi_+(F_m(x)) 
\end{equation}
On the other hand, in the same way as in the proof of Lemma~\ref{limits}, these probabilities converge to $0$ or to $1$ almost surely, and the probability of tending to $1$ equals to $\Pb(A)$.

Applying this, we see that the probability $\Pb(A\mid g_1,\dots,g_m) $
converges almost surely to $0$ or to $1$. However, it cannot converge to $1$, as the right hand side of~\eqref{not-plus} is at most $1-\eps$ due to the assumption. Hence (again, in the same way as in the proof of Lemma~\ref{limits}), the limit is almost surely equal to~$0$, and thus $\Pb(A)=0$ due to the martingale property.
\end{proof}

Until now, we haven't used shiftability in our reasoning. But the following statement shows that it is important: due to it, different points of $\bbR$ cannot show completely different behavioral patterns -- that is, if one can go to any of infinities, so do all of them.

\begin{Lem}\label{prop2}
If there exists $x \in \bbR$ such that $\Fi_+(x) > 0$, then for every $y \in \bbR$ $ \Fi_+(y)>0.$
Similarly, if there exists $x \in \bbR$ such that $\Fi_-(x) > 0$, then for every $y \in \bbR$ $ \Fi_-(y)>0.$
\end{Lem}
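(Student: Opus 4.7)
The plan is to reduce the claim to a combination of three ingredients already available: the monotonicity of $\Fi_+$ (established just before Proposition~\ref{prop1}), the shiftability assumption from Definition~\ref{unbound}, and the Markov property of the left random walk at a deterministic time. Only the statement about $\Fi_+$ needs attention; the one about $\Fi_-$ is entirely symmetric, using that $\Fi_-$ is non-increasing and that shiftability also lets us push points arbitrarily far to the left.

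Fix $x$ with $\Fi_+(x) > 0$ and an arbitrary $y \in \bbR$. If $y \geq x$, monotonicity gives $\Fi_+(y) \geq \Fi_+(x) > 0$, so the content is in the case $y < x$. Applying the shiftability property of Definition~\ref{unbound} to the point $y$ with the choice $a = x - y$, I obtain some $k \in \bbN$ with
\[
p := \Pb\bigl(F_k(y) > x\bigr) > 0.
\]

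Next, I invoke the Markov property at time $k$: conditionally on the first $k$ maps $g_1, \dots, g_k$, the subsequent random walk $F_{k+n} \circ F_k^{-1}$ is an independent copy of the same RDS, started from the point $F_k(y)$. Since the event $\{F_n(y) \to +\infty\}$ is a tail event depending only on the shifted trajectory, one obtains the identity
\[
\Fi_+(y) = \bbE\bigl[\Fi_+(F_k(y))\bigr].
\]
Restricting to the event $\{F_k(y) > x\}$ and using the monotonicity of $\Fi_+$ on this event,
\[
\Fi_+(y) \;\geq\; \bbE\bigl[\Fi_+(F_k(y)) \cdot \mathbf{1}_{F_k(y) > x}\bigr] \;\geq\; \Fi_+(x) \cdot \Pb\bigl(F_k(y) > x\bigr) \;=\; p \cdot \Fi_+(x) \;>\; 0.
\]

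The only real subtlety is the rigorous use of the Markov property for the tail event $\{F_n(y)\to+\infty\}$, but since this event is entirely determined by the post-$k$ portion of the trajectory, and that portion is an i.i.d. copy of the walk started from $F_k(y)$, the identity $\Fi_+(y) = \bbE[\Fi_+(F_k(y))]$ is essentially immediate and does not require any additional machinery beyond discrete conditional expectations.
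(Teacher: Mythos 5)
Your proof is correct and follows essentially the same route as the paper's: shiftability pushes $y$ to the right of $x$ with positive probability $p$, monotonicity then gives $\Fi_+$ at least $\Fi_+(x)$ there, and the Markov property combines these into $\Fi_+(y)\ge p\cdot\Fi_+(x)>0$. The only difference is that you spell out the conditional-expectation identity that the paper leaves implicit, which is a harmless elaboration.
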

\begin{proof}
Fix $y$. Shiftability allows us to move $y$ farther to the right than $x$ with positive probability, say, $p$. If $y$ is already greater than $x$, we can skip this step and pose $p = 1$. But any point greater than $x$ goes to infinity with probability at least $\Fi_+(x)$, so 
$$
\Fi_+(y) \ge p\cdot \Fi_+(x)>0.
$$
\end{proof}

We then have the following 

\begin{pro}\label{lem1}
If there exists $x$ and $y$ such that $\Fi_+(x) > 0$ and $\Fi_-(y) > 0$, then for every $z \in \bbR$ $\Fi_+(z) + \Fi_-(z)=1$.
\end{pro}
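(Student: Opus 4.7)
The plan is to combine Lemma~\ref{prop2} with the martingale argument already used in the proofs of Lemma~\ref{limits} and Proposition~\ref{prop1}. Applying Lemma~\ref{prop2} to the hypotheses $\Fi_+(x)>0$ and $\Fi_-(y)>0$, I first upgrade them to $\Fi_+(t)>0$ and $\Fi_-(t)>0$ at every $t\in\bbR$. It then suffices to prove $\Fi_0(z)=0$ for an arbitrary fixed point $z$.

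Introduce the event
$$
A = \{\omega \in \Omega \mid F_{n,\omega}(z) \text{ does not tend to } +\infty \text{ or } -\infty\},
$$
whose probability is exactly $\Fi_0(z)$. By the Markov property the conditional probabilities w.r.t.\ the first $m$ maps satisfy
$$
\Pb(A \mid g_1,\dots,g_m) = \Fi_0(F_m(z)) = 1 - \Fi_+(F_m(z)) - \Fi_-(F_m(z)).
$$
The same martingale convergence argument as in the proofs of Lemma~\ref{limits} and Proposition~\ref{prop1} then shows that these conditional probabilities converge almost surely to $\mathds{1}_A$; in particular, they converge to $1$ almost surely \emph{on} $A$.

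It remains to derive a contradiction with this on a set of positive $\Pb$-measure inside $A$. Fix once and for all some reference point $x_0\in\bbR$; by the first step, $\eps:=\Fi_+(x_0)>0$. On the event $A$, Lemma~\ref{limits} forces $\limsup_{n\to\infty} F_n(z) = +\infty$ (the two almost sure limits are infinite, and if $\liminf$ were $+\infty$ too then $F_n(z)$ would tend to $+\infty$, contradicting the definition of $A$). Hence there is a random subsequence $m_k\to\infty$ with $F_{m_k}(z)\ge x_0$, and by the monotonicity of $\Fi_+$,
$$
\Fi_0(F_{m_k}(z)) \le 1 - \Fi_+(F_{m_k}(z)) \le 1 - \Fi_+(x_0) = 1-\eps < 1.
$$
Thus, almost surely on $A$, the conditional probabilities $\Pb(A\mid g_1,\dots,g_m)$ have a subsequential value $\le 1-\eps$, so their almost sure limit is at most $1-\eps<1$. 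This contradicts convergence to $1$ on $A$ unless $\Pb(A)=0$, giving $\Fi_0(z)=0$ as required.

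The one genuinely delicate point is that no \emph{uniform} lower bound on $\Fi_\pm$ is available a priori, only pointwise positivity. The resolution is that one needs positivity of $\Fi_+$ only at a single reference point $x_0$: monotonicity of $\Fi_+$ together with the oscillation forced by $A$ (so that trajectories visit $[x_0,+\infty)$ infinitely often) is enough to keep $\Fi_0(F_{m_k}(z))$ uniformly below $1$, which is all that the martingale argument needs.
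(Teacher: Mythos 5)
Your proof is correct, and it follows the same general skeleton as the paper's (L\'evy 0--1 law applied to the conditional probabilities $\Pb(A\mid g_1,\dots,g_m)=\Fi_0(F_m(z))$, plus monotonicity of $\Fi_\pm$), but the way you derive the contradiction with convergence to $1$ on $A$ is genuinely different. The paper uses \emph{both} hypotheses: after upgrading to $\Fi_+(0),\Fi_-(0)>0$ via Lemma~\ref{prop2}, monotonicity gives the \emph{uniform} bound $\Fi_+(z)+\Fi_-(z)\ge\min(\Fi_+(0),\Fi_-(0))=\eps$ on all of $\bbR$ (using $\Fi_+$ on $[0,+\infty)$ and $\Fi_-$ on $(-\infty,0]$), so $\Fi_0\le 1-\eps$ everywhere and the martingale simply can never approach $1$. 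You instead observe that no uniform bound is needed: on $A$, Lemma~\ref{limits} forces $\limsup_n F_n(z)=+\infty$, so the trajectory returns infinitely often to $[x_0,+\infty)$ where $\Fi_0\le 1-\Fi_+(x_0)$, which already blocks convergence to $1$ along a subsequence. The payoff of your route is that it uses only \emph{one} of the two hypotheses (positivity of $\Fi_+$ somewhere), and hence actually proves the stronger dichotomy of the paper's next statement, Proposition~\ref{lem2} --- indeed, your argument is essentially a variant of the paper's proof of that proposition (which likewise combines monotonicity of $\Fi_+$ with Lemma~\ref{limits} to upgrade $\Fi_+\not\equiv 0$ to $\Fi_++\Fi_-\equiv 1$). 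The cost is an extra appeal to Lemma~\ref{limits} at the final step, which the paper's uniform-bound version of this particular proposition avoids.
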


\begin{proof}
Applying Lemma~\ref{prop2}, we see that in this case $\Fi_+(0), \Fi_-(0)>0$. Note now, that the function 
$\Fi_0(z)$ is thus bounded away from~1. Indeed, due to the monotonicity of $\Fi_{\pm}$ for $z\ge 0$ we have $\Fi_+(z)\ge \Fi_+(0)$, while for $z\le 0$ we have $\Fi_-(z)\ge \Fi_-(0)$, thus
\[
\forall z\in \bbR \quad \Fi_+(z)+\Fi_-(z) \ge \min (\Fi_+(0),\Fi_-(0))=:\eps>0,
\]
and hence
\begin{equation}\label{one-eps}
\forall z\in \bbR \quad \Fi_0 (z) =1-\Fi_-(z)-\Fi_+(z) \le 1-\eps.
\end{equation}

As in the proof of Proposition~\ref{prop1}, take any initial point $x\in\bbR$ and consider the conditional probabilities 
\[
\Pb(\lim_{n\to\infty} F_{n,\om}(x) \neq \pm\infty \mid g_1,\dots, g_m).
\]
On one hand, such a conditional probability is equal to~$\Fi_0(F_m(x))$ due to the Markovian property. On the other hand, it should (due to the same arguments) converge to $0$ or $1$, converging to $1$ with the probability $\Fi_0(x)$. However, due to uniform upper bound~\eqref{one-eps} it cannot converge to~$1$, hence $\Fi_0(x)=0$.

\end{proof}

Now we see, that we do not have much freedom with the behavior of the random iterations: at least one of the functions $\Fi_+$, $\Fi_-$ and $\Fi_0$ must vanish identically. The next proposition makes this observation even stronger:

\begin{pro}\label{lem2}
Either $\Fi_+(z) + \Fi_-(z) \equiv 1$, or $\Fi_+(z) + \Fi_-(z) \equiv 0$. Equivalently, either $\Fi_0 \equiv 0$ or $\Fi_0\equiv 1$.
\end{pro}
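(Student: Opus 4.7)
The plan is to combine Lemma~\ref{prop2}, Proposition~\ref{lem1}, and a martingale argument in the spirit of the proofs of Propositions~\ref{prop1} and~\ref{lem1}. By Lemma~\ref{prop2}, each of $\Fi_+$ and $\Fi_-$ is either identically zero on $\bbR$ or strictly positive everywhere on $\bbR$. This yields three cases. If both $\Fi_+ \equiv 0$ and $\Fi_- \equiv 0$, then by definition $\Fi_0 \equiv 1$. If both are strictly positive everywhere, then Proposition~\ref{lem1} gives $\Fi_+ + \Fi_- \equiv 1$, i.e.\ $\Fi_0 \equiv 0$. The only remaining case, up to swapping the roles of $\Fi_+$ and $\Fi_-$ (the statement being symmetric under this swap), is $\Fi_+ > 0$ everywhere and $\Fi_- \equiv 0$; the task reduces to deducing $\Fi_+ \equiv 1$ from this.

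Fix $x \in \bbR$ and let $B = \{\omega \mid F_{n,\omega}(x) \not\to +\infty\}$; by Lemma~\ref{limits} combined with $\Fi_- \equiv 0$, up to a null set $B$ coincides with the oscillation event $\{\limsup F_n(x) = +\infty,\ \liminf F_n(x) = -\infty\}$, and $\Pb(B) = \Fi_0(x)$. By the Markov property, $\Pb(B \mid g_1, \dots, g_m) = \Fi_0(F_m(x))$, and as in the proofs of Propositions~\ref{prop1} and~\ref{lem1}, the martingale convergence theorem (L\'evy's upward theorem) gives $\Fi_0(F_m(x)) \to \mathbf{1}_B$ almost surely. Using the monotonicity of $\Fi_+$ established at the beginning of the section, $\Fi_0 = 1 - \Fi_+$ is non-increasing; set $\beta := \sup_{z \in \bbR}\Fi_+(z) = \lim_{z \to +\infty}\Fi_+(z)$. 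Since $\Fi_+$ is strictly positive on $\bbR$, $\beta > 0$, and hence $\Fi_0(z) \to 1 - \beta < 1$ as $z \to +\infty$.

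Now suppose for contradiction that $\Pb(B) > 0$. On the event $B$, Lemma~\ref{limits} yields $\limsup F_n(x) = +\infty$, so along a subsequence $F_{n_k}(x) \to +\infty$ and consequently $\Fi_0(F_{n_k}(x)) \to 1 - \beta$. Thus $\liminf_n \Fi_0(F_n(x)) \le 1 - \beta < 1$ on $B$, contradicting the almost sure convergence $\Fi_0(F_n(x)) \to \mathbf{1}_B = 1$ on $B$. Hence $\Pb(B) = 0$, i.e.\ $\Fi_0(x) = 0$, and since $x$ was arbitrary, $\Fi_0 \equiv 0$, which forces $\Fi_+ \equiv 1$.

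The main obstacle is precisely this asymmetric subcase $\Fi_+ > 0$, $\Fi_- \equiv 0$: Proposition~\ref{lem1} does not apply, and Proposition~\ref{prop1} cannot be invoked directly because the monotonicity of $\Fi_+$ does not \emph{a priori} provide a uniform positive lower bound (the infimum at $-\infty$ could still be zero). The key idea is that it suffices to use the monotone limit of $\Fi_+$ at $+\infty$, since on the oscillation event the trajectory visits arbitrarily large values, and this alone is enough to prevent the L\'evy-limit from reaching $1$.
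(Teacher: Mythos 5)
Your proof is correct, and it reaches the conclusion by a somewhat different route than the paper, though both rest on the same engine: L\'evy's upward theorem applied to a monotone harmonic function evaluated along the random orbit. The paper runs a single argument that covers your cases 2 and 3 at once and needs neither the case split nor Proposition~\ref{lem1}: assuming $\Fi_+>0$ somewhere (hence, by Lemma~\ref{prop2}, everywhere), it considers the martingale $\Fi_+(F_m(x))\to\mathbf{1}_A$ for $A=\{F_{n,\om}(x)\to+\infty\}$ and observes that on $A^c$ this martingale tends to $0$, which --- since $\Fi_+$ is non-decreasing and everywhere positive --- forces $F_m(x)\to-\infty$; hence $\Fi_-(x)\ge 1-\Fi_+(x)$ and $\Fi_++\Fi_-\equiv 1$. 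Your handling of the asymmetric subcase ($\Fi_+>0$ everywhere, $\Fi_-\equiv 0$) is essentially the dual of this: you track $\Fi_0=1-\Fi_+$ along the orbit and derive a contradiction from the fact that, on the oscillation event, the orbit visits arbitrarily large values where $\Fi_0$ is bounded away from $1$ by $1-\sup_z\Fi_+(z)$. Both arguments are sound; the paper's is shorter and more uniform, while yours isolates explicitly which hypothesis (positivity of $\Fi_+$ at a single point, hence $\sup\Fi_+>0$) kills the oscillation event, and correctly diagnoses why Proposition~\ref{prop1} cannot be invoked directly (no uniform lower bound at $-\infty$). One small stylistic remark: your first two cases could be merged into the paper's single argument, but as written nothing is missing.
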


\begin{proof}
Assume that $\Fi_+>0$. As in the proof of Proposition~\ref{prop1}, take any initial point $x\in\bbR$ and consider the event $A=\{F_{n,\om}(x)\to+\infty\}$ and its conditional probabilities w.r.t. $g_1,\dots,g_m$. 

Again due to the same measure theory arguments the conditional probability
\begin{equation}\label{A:plus}
\Pb(A\mid g_1,\dots,g_m) = \Fi_+(F_m(x)) 
\end{equation}
converges as $m\to\infty$ almost surely to $0$ or to $1$, and tends to $1$ with the probability equal to $\Fi_+(x)$, hence to $0$ with the probability $1-\Fi_+(x)$.

Now, due to monotonicity of $\Fi_+$, if $\Fi_+(F_{m,\om}(x))\to 0$, then $F_{m,\om}(x)\to -\infty$. Hence, $\Fi_-(x)\ge 1-\Fi_+(x)$, and thus $\Fi_+ + \Fi_- \equiv 1$. The case $\Fi_->0$ is treated analogously, and $\Fi_+=\Fi_-\equiv 0$ implies $\Fi_0\equiv 1$.
\end{proof}

\section{Proof of the Theorem~\ref{th:fi}}\label{2dyn}

In the previous section we proved that either one of the functions $\Fi_+$, $\Fi_-$ and $\Fi_0$ is identically equal to $1$ (immediately forcing two others to vanish), or $\Fi_- = 1 - \Fi_+$ and both are monotonously approaching $0$ and $1$, though never reaching. This section is devoted to the duality arguments, relating possible behaviours for $\mu$ and $\hat{\mu}$. 

We start with the following proposition; it is quite natural to expect, if we think of the inverse dynamics as a dynamics with reverted time.

\begin{pro}
Suppose $\Fi_+ \equiv 1$. Then $\hat \Fi_+ \equiv 0$. Similarly, if $\Fi_- \equiv 1$, then $\hat \Fi_- \equiv 0$.
\end{pro}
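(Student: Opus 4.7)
The plan is to exploit the fact that, for each fixed $n$, the random homeomorphism $\hat F_n$ of the inverse dynamics has the same distribution as $F_n^{-1}$ of the forward dynamics. Indeed, writing $\hat F_n = \hat g_n\circ\cdots\circ \hat g_1$ with $\hat g_i$ i.i.d.\ from $\hat\mu$, and noting that $F_n^{-1} = g_1^{-1}\circ\cdots\circ g_n^{-1}$ where each $g_i^{-1}$ has distribution $\hat\mu$, the joint distribution of an i.i.d.\ tuple is invariant under reversing the order, so $\hat F_n \stackrel{d}{=} F_n^{-1}$. Note that this identity is only marginal in $n$: the two processes in $n$ are different, and no joint coupling is being asserted.

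Next, since every $f_i$ is orientation-preserving, so is every composition, and we have the elementary pointwise duality
\[
F_n^{-1}(x) > N \iff F_n(N) < x.
\]
Combining this with the marginal identity above, we obtain, for every $x,N\in\bbR$ and every $n$,
\[
\Pb\bigl(\hat F_n(x) > N\bigr) \;=\; \Pb\bigl(F_n^{-1}(x)>N\bigr)\;=\; \Pb\bigl(F_n(N) < x\bigr).
\]
The hypothesis $\Fi_+\equiv 1$ says that for every fixed $N$, the sequence $F_n(N)$ tends almost surely to $+\infty$, so $\Pb(F_n(N)<x)\to 0$ as $n\to\infty$. Thus $\Pb(\hat F_n(x)>N)\to 0$ for every $N$.

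Finally, to pass from this to $\hat\Fi_+(x)=0$, I would use the inclusion
\[
\{\hat F_n(x)\to +\infty\} \;\subset\; \liminf_{n\to\infty}\{\hat F_n(x)>N\},
\]
valid for every $N$, and Fatou's lemma to conclude
\[
\hat\Fi_+(x) \;\le\; \liminf_{n\to\infty}\Pb\bigl(\hat F_n(x)>N\bigr) \;=\; 0.
\]
The case $\Fi_-\equiv 1$ is entirely symmetric, using the dual equivalence $F_n^{-1}(x)<-N \iff F_n(-N)>x$. The only real subtlety is the one flagged above: the equality $\hat F_n \stackrel{d}{=} F_n^{-1}$ holds only marginally for each $n$, so one cannot simply transport the event $\{F_n^{-1}(x)\to +\infty\}$ across; the Fatou step is what converts the marginal bound into a statement about the limit behaviour of the inverse process.
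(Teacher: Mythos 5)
Your proof is correct and follows essentially the same route as the paper: the core of both arguments is the marginal identity $\Pb(\hat F_n(x)>N)=\Pb(F_n(N)<x)$ coming from $\hat F_n \stackrel{d}{=} F_n^{-1}$ and orientation preservation, combined with $\Fi_+\equiv 1$ forcing the right-hand side to $0$. The paper finishes by deriving a contradiction from two complementary probabilities summing to more than $1$, whereas you finish with Fatou's lemma; this is only a difference in bookkeeping, and your explicit remark that the identity is merely marginal in $n$ is a welcome clarification of a point the paper leaves implicit.
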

\begin{proof}
Let us prove the first statement of the proposition.
Fix $x \in \bbR$. As $\Fi_+ (x) = 1$, 
\[
\forall y \in \bbR \quad \Pb(F_k(x)>y) \rightarrow 1, \quad \text{ as } k\rightarrow \infty. 
\]
Suppose there exists $y$ such that $\hat \Fi_+(y) = p$. Then 
\[\liminf_{n\rightarrow \infty} \Pb(\hat F_n(y) > z)\geqslant p. 
\]
Therefore there exists such $N \in \bbN$, that 
$$
\Pb(x>\hat F_n(y)) = \Pb(F_n(x)>y) > 1 - \frac{p}{2}
$$
and $\Pb(\hat F_n(y)>x) >  p/2$ simultaneously. This contradiction concludes the proof.
\end{proof}

So, if $\Fi_+ \equiv 1$ then either $\hat \Fi_- \equiv  1$ or $\hat \Fi_0 \equiv 1$. The first case is illustrated by asymmetrical random walk (sample functions $f_{1,2}(x) = x \pm 1$ with probabilities different from $1/2$). The second case is a little trickier, yet still realizable by our means (see Fig.\ref{f:maps}). Put

$$
f_1(x) = 
	\begin{cases}
		x + 1, & \text{if} ~~x<0;\\
		2x + 1, & \text{if} ~~x \geqslant 0;
	\end{cases}
	 ~~~p_1 = \frac{1}{2}.
$$
and
$$
f_2(x) = x - 1,~ p_2 = \frac{1}{2}.
$$

Now, note that the probability that the iterations $F_{n,\om}(x)$ starting with $x\ge 0$ tend to $+\infty$ is strictly positive. In particular, 
\[
\Fi_+|_{[0,+\infty)} \ge \Fi_+(0)>0.
\]
On the other hand, on $(-\infty,0]$ our RDS is just a standard ``$+1$/$-1$'' random walk, and hence the images of any point $x<0$ almost surely reach $[0,+\infty)$. Applying the Markov property, we get that $\Fi_+|_{(-\infty,0)}\ge \Fi_+(0)$, and hence the function $\Fi_+$ is bounded away from~$0$. By Proposition~\ref{prop1}, it implies $\Fi_+\equiv 1$.

On the other hand, the trajectories of the inverse RDS almost surely do not tend to infinity. Indeed, on the negative half-line we still have ``$+1$/$-1$'' random walk, while $+\infty$ (under a change of coordinates $z=\frac{1}{x}$) becomes a positive Lyapunov exponent point.

\begin{figure}[!h!]
\begin{center}
\includegraphics{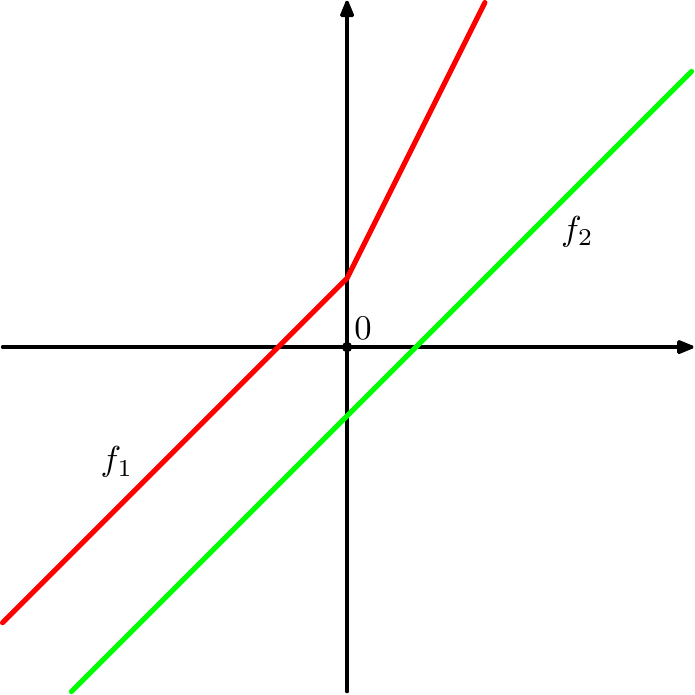}
\end{center}
\caption{Maps $f_1$ and $f_2$}\label{f:maps}
\end{figure}

Case with $\Fi_- \equiv 1$ becomes the one considered above under the change of coordinate $x \rightarrow -x$. Similarly, $\hat \Fi_{\pm} \equiv 1$ generate the same cases under the interchange of forward and inverse dynamics. All that rests are <<almost>> symmetrical cases: 

\begin{enumerate}
\item $\Fi_0 \equiv 1$, $\hat \Fi_0 \equiv 1$;
\item $\Fi_+$  and $\Fi_-$ are not constant, $\hat \Fi_0 \equiv 1$;
\item $\Fi_+$, $\Fi_-$, $\hat \Fi_+$ and $\hat \Fi_-$ are not constant. 
\end{enumerate}

Examples for the first two are quite simple to present: a classical random walk ($f_{1,2}(x) = x \pm 1$ with probabilities $1/2$) for the former, and the same random walk with additional function $f_3(x) = 2x$ with some positive probability for the latter. The third case, as it appears, never realizes.

In order to prove it, consider the following measure:
\begin{equation}
\nu[x,y] = \Fi_+(y) - \Fi_+(x).
\end{equation}

It is easy to check straightforwardly that
\[
\Fi_+(x) = \sum_{i = 1}^{k} \Fi_+(f_i(x)) \cdot p_i,
\]
(where $k$ can be infinite). Thus we conclude that $\nu$ is stationary for inverse dynamics. From the definition of $\Fi_+$ and our assumbtions we conclude that $\nu$ is stochastical and non-constant. 

Let us take an ergodic component $\tilde \nu$ of $\nu$; stochastical ergodicity theorem of Kakutani (\cite{KAK}, \cite[Theorem 3.1]{FUR}) then implies that for almost any starting point $t$ its random orbit is almost surely (asymptotically) distributed with accordance with $\tilde \nu$. In particular, it will visit arbitrarily many times a closed interval with any strictly positive measure. Therefore $\hat \Fi_0(t) = 1$, and then $\hat \Fi_0 \equiv 1$.

Thus we have proved the Theorem~\ref{th:fi}.

\section{Infinite monster} \label{monster}

One of the arguments in the finitely generated RDS case was that if a trajectory almost surely does not tend neither to $+\infty$, nor to $-\infty$, then it almost surely endlessly oscillates between the infinities, and thus visits a sufficiently large interval $J$ infinitely often. This section is devoted to construction of a ``monstrous'' example showing that this is no longer the case for infinitely generated systems. 

The idea is quite natural: if we want to make such a system whose orbits avoid any compact interval after some initial amount of time, we need the absolute value of $x$ to tend to $\infty$ and also allow sufficiently large ``jumps'', so the orbit could avoid getting ``caught'' in a finite interval. 
In order to do so, we consider strongly shifting maps:

\begin{equation}\label{maps}
f_{k} (x) = x + (-1)^k e^{e^k}, \quad  \quad k=1,2,\dots.
\end{equation}
with sufficiently slowly decreasing probabilities
\begin{equation}\label{probas}
p_{k} =  \frac{1}{k} - \frac{1}{k+1}=  \frac{1}{k(k+1)}, \quad k=1,2,\dots.
\end{equation}

\begin{Th}
The trajectories of the RDS, defined by~\eqref{maps} and~\eqref{probas}, almost surely visit any compact interval only finitely many times. The same holds if we replace the maps $f_{ k}$ by any maps
$\tilde f_{\pm k}$ such that the difference $\tilde f_{k}(x)-f_{k}(x)$ is bounded uniformly in~$k$ and~$x$.
\end{Th}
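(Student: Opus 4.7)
The plan is to apply the first Borel--Cantelli lemma to the events $A_n = \{F_n(x) \in J\}$ for a fixed initial point $x\in\bbR$ and a fixed compact interval $J$: if I can show $\sum_n \Pb(A_n) < \infty$, then almost surely only finitely many $A_n$ occur, and taking a countable union over $J = [-r,r]$, $r\in\bbN$, gives the full statement.

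The key observation exploits the super-exponential growth in~\eqref{maps}. Writing $F_n(x) = x + S_n$ with $S_n = \sum_{j=1}^n (-1)^{k_j} e^{e^{k_j}}$, let $K_n = \max_{j\le n} k_j$ and let $M\ge 1$ be its multiplicity. Because each $f_k$ is a pure translation whose sign depends only on $k$, all occurrences of the maximal index $K_n$ add constructively to $\pm M\, e^{e^{K_n}}$, while the sum of all remaining jumps has absolute value at most $n\, e^{e^{K_n - 1}}$. Hence
$$|S_n| \;\ge\; e^{e^{K_n}} - n\, e^{e^{K_n - 1}}.$$
Since $e^{e^K}/e^{e^{K-1}} = \exp\!\bigl(e^{K-1}(e-1)\bigr)$ grows doubly exponentially in $K$, this lower bound exceeds any fixed constant $C_J$ (depending only on $x$ and $J$) as soon as $K_n > K^{*}(n) := \log\log n + O(1)$. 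In particular, for all sufficiently large $n$ one has $A_n \subset \{K_n \le K^{*}(n)\}$.

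Next I estimate this last probability. The telescoping $\Pb(k_j > K) = \sum_{k>K} \tfrac{1}{k(k+1)} = \tfrac{1}{K+1}$ gives
$$\Pb(K_n \le K) \;=\; \Bigl(1 - \tfrac{1}{K+1}\Bigr)^{n} \;\le\; \exp\!\Bigl(-\tfrac{n}{K+1}\Bigr),$$
so substituting $K = K^{*}(n) = O(\log\log n)$ yields $\Pb(A_n) \le \exp\!\bigl(-n/(C\log\log n)\bigr)$, which is summable in $n$. Borel--Cantelli then concludes the argument for the unperturbed system.

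For the perturbed maps $\tilde f_k$, I use that the unperturbed $f_k$ are translations: the recursion $\tilde F_j(x) - F_j(x) = \tilde F_{j-1}(x) - F_{j-1}(x) + (\tilde f_{k_j} - f_{k_j})(\tilde F_{j-1}(x))$ telescopes to $|\tilde F_n(x) - F_n(x)| \le nB$, where $B = \sup_{k,y}|\tilde f_k(y) - f_k(y)|$. Consequently $\tilde F_n(x) \in J$ forces $|S_n| \le nB + C_J$, which is still negligible compared to $n\, e^{e^{K_n - 1}}$ once $K_n \ge 1$, so the same bound $K_n \le \log\log n + O(1)$ and the same summable tail follow, only the constants changing. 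The main delicate point throughout is the dominant-jump comparison: one must verify that the polynomial prefactor $n$ in front of $e^{e^{K_n-1}}$ cannot close the super-exponential gap between $e^{e^K}$ and $e^{e^{K-1}}$, and it is precisely this double-exponential gap that pins $K^{*}(n)$ at order $\log\log n$ and thereby produces the summability needed for Borel--Cantelli.
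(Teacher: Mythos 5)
Your proposal is correct and follows essentially the same route as the paper: both arguments rest on the observation that all jumps of the maximal rank $K_n$ share a sign and that $e^{e^{K}}$ exceeds $n\,e^{e^{K-1}}$ once $K$ passes a slowly growing threshold, combined with the bound $\Pb(K_n\le K)=(1-\tfrac{1}{K+1})^n$ and Borel--Cantelli. You apply Borel--Cantelli directly to the events $\{F_n(x)\in J\}$ with threshold $\log\log n$ while the paper applies it to $\{K_n<\sqrt{n}\}$ and concludes deterministically, but this is only a cosmetic difference; your treatment of the dominant-jump comparison and of the perturbed maps is in fact more explicit than the paper's.
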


\begin{proof}
We will prove the conclusion of the theorem for the original maps $f_{\pm k}$; the reader will easily see that the same proof still works for the perturbed ones, too.

Let us call $k$ in $f_{k}$ its \emph{rank}; from~\eqref{maps}, we see that each of these maps is much more <<powerful>> than a composition of a lot of maps of lower rank. Let us now consider the sequence of highest applied ranks. Namely, let $k_n$ be the (random) sequence of ranks, and let $K_n$ denote the maximal rank appearing up to the $n$-th iteration:
\[
K_n:=\max_{n'\le n} k_{n'}.
\]
We then have the following lower estimate for the growth of these ranks: 
\begin{Lem}\label{l:K-n}
Almost surely for all $n$ sufficiently large one has $K_n>\sqrt{n}$.
\end{Lem}
\begin{proof}
The event $\{K_n<\sqrt{n}\}$ coincides with the event $\{k_1<\sqrt{n},\dots, k_n<\sqrt{n}\}$, and thus (due to the choice~\eqref{probas} of probabilities) has the probability 
\[
\Pb(K_n<\sqrt{n}) = (1-\frac{1}{[\sqrt{n}]})^n < e^{-\sqrt{n}}.
\]
The application of Borel--Cantelli Lemma thus concludes the proof.
\end{proof}

Now, once $K_n>\sqrt{n}$ and $n$ is sufficiently large, it is immediate from the definition~\eqref{maps} that the highest rank maps (and there is at least one of them) overpower at most $n-1$ lower ranking ones, shifting the initial point $x$ to the corresponding infinity.
\end{proof}

The example above is asymmetric. However, it can be modified to become symmetric. Namely, take the maps
\begin{equation}\label{maps-inv}
f_{\pm k} (x) = x \pm e^{e^k}, \quad  n\in \bbN.
\end{equation}
and associate them with the probabilities
\begin{equation}\label{probas-inv}
p_{\pm k} = \frac{1}{2} \cdot \left( \frac{1}{k} - \frac{1}{k+1} \right), \quad k=1,2,\dots.
\end{equation}

\begin{Th}\label{t:symm}
The trajectories of the RDS, defined by~\eqref{maps-inv} and~\eqref{probas-inv}, almost surely visit any compact interval only finitely many times. The same holds if we replace the maps $f_{\pm k}$ by any maps
$\tilde f_{\pm k}$ such that the difference $\tilde f_{\pm k}(x)-f_{\pm k}(x)$ is bounded uniformly in~$k$ and~$x$.
\end{Th}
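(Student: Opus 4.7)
The plan is to mimic the proof of the preceding theorem and to add one extra probabilistic estimate that controls the possible cancellations between opposite applications of the same rank. Write each step as $x_i - x_{i-1} = \eps_i\, a_{k_i}$, where $a_k := e^{e^k}$, the rank $k_i \in \bbN$ is drawn with probability $p_{k_i} = 1/(k_i(k_i+1))$, and $\eps_i \in \{-1,+1\}$ is an independent fair sign; set $N_k(n) := |\{i \le n : k_i = k\}|$ and $D_k(n) := \sum_{i \le n,\, k_i = k} \eps_i$. Then $x_n - x_0 = \sum_k D_k(n)\, a_k$, and conditional on the rank sequence the $D_k(n)$ over distinct $k$ are independent signed sums, with $D_k(n) \equiv N_k(n) \pmod{2}$.

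Set $k_0(n) := \lfloor \log\log n\rfloor + 1$ and define $K_n^* := \max\{k : D_k(n) \ne 0\}$ (with $K_n^* := 0$ when all $D_k(n)$ vanish). Since $a_k/a_{k-1} = e^{e^{k-1}(e-1)} \ge 2n$ once $k > k_0(n)$ and $n$ is large, the dominance argument of the previous theorem gives $|x_n - x_0| \ge a_{K_n^*} - n\, a_{K_n^* - 1} \ge \tfrac12 a_{K_n^*} > M$ whenever $K_n^* > k_0(n)$. Therefore, for any fixed $M$ and $n$ large,
\[
\{|x_n - x_0| \le M\} \;\subseteq\; \{D_k(n) = 0 \ \forall\, k > k_0(n)\} \;\subseteq\; \{N_k(n) \text{ is even} \ \forall\, k > k_0(n)\},
\]
the second inclusion because $D_k$ and $N_k$ share parity.

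The key new computation bounds the parity event via a character-sum identity. Writing $\mathbf{1}[m \text{ even}] = (1+(-1)^m)/2$, for any finite $A \subset \bbN$,
\[
\Pb\bigl(N_k(n) \text{ even}\ \forall\, k \in A\bigr)
= 2^{-|A|} \sum_{S \subseteq A} \bbE\bigl[(-1)^{N_S(n)}\bigr]
= 2^{-|A|} \sum_{S \subseteq A} (1 - 2 q_S)^n,
\]
where $q_S := \sum_{k \in S} p_k$ and $N_S(n) := \sum_{k \in S} N_k(n) \sim \mathrm{Bin}(n, q_S)$. Viewing the right-hand side as $\bbE_U[(1 - 2 q_U)^n]$ for $U$ uniform on subsets of $A$, applying $(1 - x)^n \le e^{-nx}$ (valid since $q_U \le 1/(k_0(n) + 1) \le 1/2$), and factorizing over the i.i.d.\ Bernoulli$(1/2)$ masks $Y_k := \mathbf{1}[k \in U]$, the expectation is bounded by $\prod_{k \in A} (1 + e^{-2 n p_k})/2$. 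Taking $A = (k_0(n), \lfloor\sqrt n\rfloor]$ makes $n p_k \ge 1$ throughout $A$, so each factor is at most $(1 + e^{-2})/2 < 0.6$ and the full product is at most $e^{-c \sqrt n}$ for some absolute $c > 0$.

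Combining the two steps, $\Pb(|x_n - x_0| \le M) \le e^{-c\sqrt n}$, which is summable; Borel--Cantelli then yields that a.s.\ $|x_n - x_0| > M$ for all sufficiently large $n$, and since $M$ was arbitrary, a.s.\ the trajectory visits every compact interval only finitely many times. For the perturbed maps $\tilde f_{\pm k}$, the difference $x_n^{\mathrm{pert}} - x_n^{\mathrm{unpert}}$ is bounded by a constant multiple of $n$, so $\{|x_n^{\mathrm{pert}} - x_0| \le M\} \subseteq \{|x_n^{\mathrm{unpert}} - x_0| \le M + Cn\}$; replacing $M$ by $M + Cn$ in the dominance step only shifts $k_0(n)$ by an additive constant, leaving the estimate unchanged. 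The main obstacle in this plan is setting up the character-sum identity cleanly; once the parity event is written as a uniform average over subsets and the inequality $(1-x)^n\le e^{-nx}$ is carried inside the expectation, the rest is careful bookkeeping with the superexponential growth already exploited in the preceding theorem.
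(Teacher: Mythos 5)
Your proof is correct, and it takes a genuinely different route from the paper's. The paper argues structurally: it looks at the record times $n_j$ at which a new maximal rank appears and shows (via a conditional-probability estimate and Borel--Cantelli) that almost surely, for all large $j$, the current record rank is applied exactly once before a strictly higher rank arrives, while the records grow like $k_{n_j}>2^{j/3}$; a single application of the record-rank map then dominates everything else. You instead bound $\Pb(|x_n-x_0|\le M)$ directly: the superexponential gap $a_k/a_{k-1}\ge 2n$ for $k>k_0(n)\approx\log\log n$ reduces this event to the parity event $\{N_k(n)\text{ even }\forall k\in A\}$, and the character-sum identity over $\{0,1\}^A$ together with $\bbE[(-1)^{N_S(n)}]=(1-2q_S)^n$ and the product factorization gives the bound $\prod_{k\in A}(1+e^{-2np_k})/2\le e^{-c\sqrt n}$. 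All the steps check out: the sign is indeed an independent fair coin given the rank, $q_S\le 1/(k_0(n)+1)\le 1/2$ keeps every term of the character sum nonnegative so the termwise bound $(1-2q_S)^n\le e^{-2nq_S}$ is legitimate, and the isometry of the unperturbed translations gives $|x_n^{\mathrm{pert}}-x_n^{\mathrm{unpert}}|\le Cn$, which is absorbed since $\tfrac12 a_{k_0(n)+1}\gtrsim n^{e}$. One microscopic slip: at the boundary $k=\lfloor\sqrt n\rfloor$ you only get $np_k\ge n/(n+\sqrt n)$, slightly below $1$, but each factor is still at most $(1+e^{-1})/2<1$, so the $e^{-c\sqrt n}$ bound survives with a different constant. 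What your approach buys is an explicit, summable quantitative rate and the stronger conclusion $|F_n(x)-x|\to\infty$ almost surely with an effective estimate, at the cost of the combinatorial identity; the paper's argument is softer and shorter but yields no rate. Either argument is acceptable.
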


\begin{proof}
Take $n_j$ to be the sequence of the moments when the new highest rank appears: $k_{n_j}>K_{n_j - 1}$. The key argument of the proof is the following: almost surely, aside of a finite number of initial steps, the next highest rank map appears \emph{before} the previous highest rank appears again (and thus the maps a chance to cancel each other). 

\begin{Lem}\label{l:j}
Almost surely, for all $j$ sufficiently large, the following two statements hold: 
\begin{itemize}
\item $\forall n=n_j +1,\dots n_{j+1} \quad k_n<k_{n_j}$.
\item $k_{n_j}>2^{j/3}$.
\end{itemize}
\end{Lem}

\begin{proof}
Fix some $j$, and let us consider the conditional distribution to given $n_j$ and to the ranks $k_1,\dots,k_{n_j}$. In the sequence $k_{n}, \, n>n_{j}$, let us consider the first $n'$ such that $k_{n'}\ge k_{n_j}$. Due to the choice of probabilities~\eqref{probas}, we have 
\begin{equation}\label{eq:cond-proba}
\Pb(k_{n'}=k_{n_j}) = \frac{1/(k_{n_j} (k_{n_j}+1))} {1/k_{n_j} } = \frac{1}{k_{n_j}+1}.
\end{equation}
At the same time, due to the same definition, we have 
\[
\Pb(k_{n'}\ge 2 k_{n_j}) = \frac{1}{2},
\]
what implies the second conclusion of the lemma due to the Law of Large Numbers. Now, we use this second conclusion to return to the first one, again applying Borel--Cantelli Lemma to the events ``$k_{n_j}>2^{j/3}$ and the first conclusion for $j$ does not hold''. Indeed, the series 
$\sum_j \frac{1}{2^{j/3}+1}$ converges, hence almost surely only a finite number of these events take place.
\end{proof}

Now, it is easy to see that Lemma~\ref{l:j} (together with the estimate of Lemma~\ref{l:K-n}) allows to conclude the proof of Theorem~\ref{t:symm}. Indeed, its first conclusion implies that for all sufficiently large $j$ the highest rank map is applied only once, and Lemma~\ref{l:K-n} implies that this rank is sufficiently high to overpower the composition of all the other maps.

\end{proof}

\section{Proof of Theorem~\ref{th:stat}} \label{measures}

Note first that the argument from~\cite[Theorem 5.1]{DKNP} allows to construct a (possibly, Radon) stationary measure for a recurrent dynamics even without the symmetry assumption. 
\begin{Lem}
A recurrent RDS on the $\bbR$ admits a Radon stationary measure.
\end{Lem}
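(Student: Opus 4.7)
The plan is to adapt the classical occupation-time construction of an invariant measure for a recurrent Markov chain, in the spirit of~\cite[Theorem~5.1]{DKNP}. By recurrence, there is a compact interval $K \subset \bbR$ such that from every starting point the orbit almost surely visits $K$ infinitely often. I would fix a base point $x_0 \in K$ and let $\tau := \inf\{n \geq 1 : X_n(x_0) \in K\}$, which is almost surely finite by recurrence, and then define a Borel measure on $\bbR$ by
\[
\nu(A) := \bbE\!\left[\sum_{n=0}^{\tau - 1} \mathds{1}_A\bigl(X_n(x_0)\bigr)\right].
\]

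The first step is to verify the stationarity identity $\nu = \int (f_*\nu)\, d\mu(f)$. This is the standard computation: shifting the summation index by one gives $\int (f_*\nu)(A)\, d\mu(f) = \bbE\bigl[\sum_{n=1}^{\tau} \mathds{1}_A(X_n(x_0))\bigr]$; both endpoints $X_0 = x_0$ and $X_\tau$ lie in $K$, and invoking the strong Markov property at $\tau$ lets me continue the trajectory from $X_\tau$ as an independent excursion with the same law as the original one. After identifying the boundary contributions at $n = 0$ and $n = \tau$, the shifted sum reproduces $\nu(A)$.

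The main obstacle is to show that $\nu$ is Radon, i.e.\ finite on every compact. Note that $\nu(K) = 1$ directly from the definition, since between times $1$ and $\tau - 1$ the orbit stays outside $K$. For a compact $L \subset \bbR$ disjoint from $K$, we have $\nu(L) = \bbE\bigl[\#\{1 \leq n \leq \tau - 1 : X_n(x_0) \in L\}\bigr]$, and this is what must be bounded. I would establish a uniform hitting estimate
\[
\exists\, N \in \bbN,\ \eps > 0 \ :\ \inf_{y \in L} \Pb_y\bigl(\exists\, n \leq N : X_n(y) \in K\bigr) \geq \eps.
\]
Pointwise this probability equals $1$ by recurrence combined with Lemma~\ref{limits}; the upgrade to uniformity over the compact $L$ comes from an equicontinuity argument using the compact-displacement property (Definition~\ref{displace}), which makes finite-horizon hitting probabilities lower semicontinuous in the starting point. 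Once the estimate is in hand, applying the strong Markov property at successive visits of the orbit to $L$ stochastically dominates the number of such visits before the next return to $K$ by a geometric random variable of parameter $\eps$, yielding $\nu(L) \leq N/\eps < \infty$.

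If the equicontinuity step turns out awkward, I would instead exhaust $\bbR$ by an increasing sequence of compacts $K = L_0 \subset L_1 \subset \cdots$ with $\bigcup_j L_j = \bbR$, and prove $\nu(L_j) < \infty$ by induction on $j$: any excursion from $K$ reaching $L_j \setminus L_{j-1}$ must cross the two-point boundary $\partial L_{j-1}$, so the strong Markov property applied at the crossing times reduces the bound to controlling the exit probabilities of one-sided excursions of our RDS from a bounded interval, an estimate directly provided by the shiftability assumption applied to the boundary points of $L_{j-1}$.
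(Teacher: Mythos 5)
There is a genuine gap at the very first step: the occupation measure up to the return time, started from a \emph{fixed point} $x_0$, is not stationary for this chain. Carrying out your index shift carefully, with $P$ the transition operator one gets
\[
\nu P(A) \;=\; \bbE\Bigl[\sum_{n=1}^{\tau}\mathds{1}_A\bigl(X_n(x_0)\bigr)\Bigr]
\;=\; \nu(A) \;-\; \delta_{x_0}(A) \;+\; \Pb\bigl(X_\tau(x_0)\in A\bigr),
\]
so stationarity requires the return distribution $\Pb(X_\tau(x_0)\in\cdot)$ to equal $\delta_{x_0}$, i.e.\ $K$ would have to be an atom to which the chain returns exactly. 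For a RDS of homeomorphisms of $\bbR$ the orbit re-enters $K$ at a random point that is in general different from $x_0$, so ``identifying the boundary contributions at $n=0$ and $n=\tau$'' is precisely the step that fails; the strong Markov property does not rescue it, because the excursion launched from $X_\tau$ has the law of an excursion from a random point of $K$, not from $x_0$. The standard repair is to first produce an invariant probability measure $\lambda$ for the \emph{induced (return) chain} on $K$ and only then take the occupation measure started from $\lambda$, since then the two boundary terms become $\lambda$ and $\lambda R=\lambda$ and cancel. But the existence of such a $\lambda$ is itself the nontrivial content of the lemma: the hard return kernel to a closed interval need not depend continuously on the starting point, so Krylov--Bogolyubov does not apply to it directly. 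This is exactly why the paper replaces the hard first-return time by a process stopped at $x_n$ with probability $\psi(x_n)$ for a \emph{continuous} cutoff $\psi$ equal to $1$ on $J$: the stopping distribution $m_x$ then depends continuously on $x$, Krylov--Bogolyubov yields a stationary measure $\nu_\psi$ on $\supp\psi$, and a compatibility relation between the measures for nested cutoffs lets one pass to an exhaustion of $\bbR$ and obtain a Radon stationary measure. Your proposal is missing this induced-chain ingredient entirely.

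Your second part (finiteness on compacts via a uniform hitting estimate and geometric domination of the number of visits to $L$ per excursion) is a reasonable sketch and could be made to work once the construction is repaired, though the claimed lower semicontinuity of finite-horizon hitting probabilities of a closed interval needs care (hitting sets of words are closed intervals, giving upper rather than lower semicontinuity; one should slightly enlarge $K$ or exploit monotonicity of the generators together with the compact-displacement bound on overshoot). In the paper's construction this issue does not arise in the same form, since finiteness on compacts comes from the normalization $\nu_\psi(J)=1$ together with the compatibility of the measures on the exhausting family.
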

\begin{proof}
Namely, let $J\subset \bbR$ be an interval such that for any initial point $x$ its random images $F_n(x)$ almost surely visit $J$ infinitely often. Take a compactly supported smooth function $\psi:\bbR\to [0,1]$, such that $\psi |_J\equiv 1$, and consider a random process of iterations that is stopped on each step at the point $x_n$ with the probability $\psi(x_n)$. 

Denote by $m_x$ the distribution of the stopping point for the process starting at the point~$x$; then $m_x$ depends continuously on~$x$. Thus it admits (via the usual Kryloff--Bogolyubov procedure) a stationary measure~$\nu_{\psi}$, that is by construction supported on~$\supp \psi$.

Finally, if $\supp \psi_1 \subset \{x\mid \psi_2(x)=1\}$, it is not difficult to check that multiplying the measure $\nu_{\psi_2}$ by $\psi_1(x)$ we obtain a measure $\psi_1(x)\nu_{\psi_2}$ that is a (non-probability) stationary measure for the $\psi_1$-process. Hence, taking a sequence of functions $\psi$ with larger and larger domains $\{\psi=1\}$, and normalizing the corresponding measures $\nu_{\psi}$ on $\nu_{\psi}(J)=1$, one gets a (non-probability) Radon stationary measure for the initial random walk.
\end{proof}

\begin{rem}
The constructed measure is not guaranteed to be fully supported or non-atomic. Actually, taking three maps 
\[
f_1(x)=x+1, \quad f_2(x)=x-1, \quad f_3(x)=x+\frac{1}{10} \sin 2\pi x
\]
with equal probabilities, one gets the dynamics for which Radon stationary measures will be supported on~$\bbZ$.
\end{rem}

The above argument allows to construct a stationary measure in the case~\ref{i:3} of Theorem~\ref{th:stat}. Now, to distinguish this case from the cases~\ref{i:2} and~\ref{i:4}, we will need the following two propositions. The first of them handles the case~\ref{i:4}:
\begin{pro}\label{p:finite}
Assume that the inverse dynamics of RDS is recurrent. Then there exists a finite stationary measure for the inverse dynamics if and only if for the forward dynamics both functions $\phi_+, \phi_-$ do not vanish (in other words, that all the points tend to each of $\pm\infty$ with positive probability).
\end{pro}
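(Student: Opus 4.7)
The heart of the argument is a duality: the forward-dynamics function $\Fi_+$ gives rise to a stationary measure for the inverse dynamics (as already noted in Section~\ref{2dyn}), and conversely, the distribution function of any probability stationary measure for the inverse dynamics is a bounded harmonic function for the forward one.

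For the $(\Leftarrow)$ direction, assume both $\Fi_+$ and $\Fi_-$ are not identically zero. By Proposition~\ref{lem1}, $\Fi_+ + \Fi_- \equiv 1$. Proposition~\ref{prop1} forces $\Fi_+(-\infty) = 0$ (else $\Fi_+ \geq \eps$ everywhere, hence $\Fi_+ \equiv 1$, contradicting $\Fi_- \not\equiv 0$), and the same reasoning applied to $\Fi_-$ gives $\Fi_+(+\infty) = 1$. Hence the Stieltjes measure $\nu$ with $\nu((x,y]) := \Fi_+(y) - \Fi_+(x)$, shown in Section~\ref{2dyn} to be stationary for the inverse dynamics via the harmonicity relation $\Fi_+(x) = \sum_i p_i \Fi_+(f_i(x))$, has total mass $1$ and is the desired finite stationary measure.

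For the $(\Rightarrow)$ direction, let $\nu'$ be a probability stationary measure for the inverse dynamics, and set $\Phi(x) := \nu'((-\infty, x])$. Unpacking inverse-dynamics stationarity (using $\hat f_i = f_i^{-1}$) gives $\nu'(A) = \sum_i p_i \nu'(f_i(A))$; specializing to $A = (-\infty, x]$ yields $\Phi(x) = \sum_i p_i \Phi(f_i(x))$. Thus $\Phi$ is harmonic for the forward dynamics, and $M_n := \Phi(F_n(x))$ is a $[0,1]$-valued bounded martingale, which converges almost surely. Since $\nu'$ is a probability measure, $\Phi(-\infty) = 0$ and $\Phi(+\infty) = 1$; combined with the monotonicity of $\Phi$, on the oscillation event (equal, by Lemma~\ref{limits}, to $\{\liminf F_n(x) = -\infty,\ \limsup F_n(x) = +\infty\}$) the sequence $M_n$ has $\liminf 0$ and $\limsup 1$, which is incompatible with almost sure convergence. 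Hence $\Fi_0 \equiv 0$. To rule out $\Fi_+ \equiv 1$, observe that then $M_n \to 1$ almost surely, so bounded convergence applied to $\bbE[M_n] = \Phi(x)$ forces $\Phi \equiv 1$, contradicting $\Phi(-\infty) = 0$; the case $\Fi_- \equiv 1$ is symmetric. With $\Fi_0 \equiv 0$ in hand, neither $\Fi_+$ nor $\Fi_-$ can be identically $0$ (otherwise the other would be identically $1$), and Lemma~\ref{prop2} then gives that both are strictly positive everywhere.

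The main technical difficulty will be the martingale step: what really does the work is that the hypothesis that $\nu'$ is \emph{finite} translates into the boundary values $\Phi(\pm\infty) \in \{0,1\}$, and this is what lets martingale convergence rule out oscillatory behaviour. If one only had a Radon (infinite) stationary measure, the distribution function $\Phi$ would still be harmonic for the forward dynamics but with $\Phi(+\infty) = +\infty$, and the argument would collapse --- matching the fact that the infinite stationary measure case is genuinely distinct.
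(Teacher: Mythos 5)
Your proof is correct and follows essentially the same route as the paper: the $(\Leftarrow)$ direction builds the stationary measure from the Stieltjes measure of $\Fi_+$ exactly as in Section~\ref{2dyn}, and the $(\Rightarrow)$ direction uses the same bounded martingale $\Phi(F_n(x))$ together with Lemma~\ref{limits} to force the limit values $0$ and $1$ and deduce that both $\Fi_+$ and $\Fi_-$ are positive. Your write-up is in fact slightly more detailed than the paper's (explicitly checking the boundary values $\Phi(\pm\infty)$ and separately excluding the cases $\Fi_\pm\equiv 1$), but the underlying argument is the same.
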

The second one handles the case~\ref{i:2}:
\begin{pro}\label{p:semi}
Assume that the inverse dynamics of RDS is recurrent. Then there exists a semi-infinite stationary measure for the  inverse dynamics $\hat \mu$, such that $\hat \mu([x,+\infty)<\infty$, if and only if for the forward dynamics the function $\phi_+\equiv 1$ (in other words, that all the points tend to~$+\infty$).
\end{pro}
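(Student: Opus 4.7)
My plan is to prove the two implications separately, using as a unifying observation that stationary (Radon) measures $\nu$ for $\hat\mu$ are in bijection with non-increasing $\mu$-harmonic functions $G$ via $G(x) := \nu([x,+\infty))$. Indeed, the inverse-dynamics stationarity identity, applied to the half-ray $[x,+\infty)$ (whose $f_i$-image is $[f_i(x),+\infty)$ by orientation preservation), translates exactly into the forward-dynamics harmonicity
\[
G(x) = \sum_i p_i\, G(f_i(x)),
\]
and the ``semi-infinite'' condition on $\nu$ becomes $G(+\infty) = 0$ together with $G(-\infty) = +\infty$.

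For the necessity direction ($\Rightarrow$), my plan is a martingale argument. Given such $G$, the process $M_n := G(F_n(x))$ is a non-negative martingale; by Doob it converges almost surely to some $M_\infty$, and since $\bbE M_n = G(x) < \infty$, Fatou gives $M_\infty < +\infty$ a.s. By Lemma~\ref{limits}, a forward trajectory from $x$ either tends to $+\infty$, tends to $-\infty$, or oscillates with $\limsup = +\infty$ and $\liminf = -\infty$. On the ``$\to -\infty$'' event $M_n \to G(-\infty) = +\infty$, and on the oscillation event $M_n$ accumulates values both near $0$ and near $+\infty$ infinitely often; both cases contradict a.s.\ finite convergence. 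Hence $\Fi_-(x) = \Fi_0(x) = 0$ for every $x$, i.e.\ $\Fi_+ \equiv 1$.

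For the sufficiency direction ($\Leftarrow$), assuming $\Fi_+ \equiv 1$ and recurrence of $\hat\mu$, my plan is to construct $G$ from hitting probabilities of the forward dynamics. Set
\[
H_N(x) := \Pb(\exists n \ge 0 : F_n(x) \le -N), \qquad G_N(x) := H_N(x)/H_N(0),
\]
so each $G_N$ is non-increasing, $\mu$-harmonic on $(-N,+\infty)$, equal to the constant $1/H_N(0)$ on $(-\infty,-N]$, and normalized to be $1$ at $0$. Shiftability (which lets one compare $G_N$ at two points by composing a fixed finite chain of maps) should furnish a Harnack-type two-sided bound on $G_N$ over each compact, uniform in $N$; I would then extract a vague subsequential limit $G$ and pass to the limit in the harmonicity identity to get that $G$ is globally $\mu$-harmonic. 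The assumption $\Fi_+\equiv 1$ forces $H_N(x) \to 0$ as $x\to+\infty$ uniformly in $N$ (the walk from far to the right is unlikely ever to descend below $-N$), which gives $G(+\infty)=0$. A clean argument then shows $G$ cannot be bounded: if it were, $G(F_n(x))$ would be a bounded martingale, and since $F_n(x)\to+\infty$ a.s., dominated convergence would force $G(x)=G(+\infty)=0$ for all $x$, contradicting $G(0)=1$. Thus $G$ is unbounded and non-increasing, so $G(-\infty)=+\infty$, and $\nu([a,b]):=G(a)-G(b)$ is the semi-infinite stationary measure sought.

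The hard part will be in the sufficiency construction: the uniform Harnack-type control yielding subsequential convergence of the $G_N$, and the uniform-in-$N$ estimate $H_N(x)\to 0$ as $x\to+\infty$ needed to conclude $G(+\infty)=0$. The latter should follow from $H_N(x)\le \Pb^x(\inf_n F_n\le 0)$ together with the fact that $\Pb^x(\inf_n F_n\le K)\to 0$ as $x\to +\infty$ for any $K$ (itself a consequence of $\Fi_+\equiv 1$ via a supermartingale argument on $H_K(F_n(x))$), but verifying these uniformly in $N$ will require care. By contrast, the non-triviality of $G$ via the bounded-harmonic argument is clean once the limit exists, and the recurrence hypothesis on $\hat\mu$ enters only implicitly through Proposition~\ref{p:finite}, which is consistent with the semi-infinite (rather than finite) nature of the constructed~$\nu$.
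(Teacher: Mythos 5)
Your proposal follows essentially the same route as the paper: the necessity direction is the paper's positive-unbounded-martingale argument applied to $\psi(x)=\hat\mu([x,+\infty))$, and the sufficiency direction is the paper's construction via the hitting probabilities $\psi_y(x)=\Pb(\exists n:\,F_n(x)<y)$, normalized at $0$ and passed to a subsequential limit as $y\to-\infty$. The compactness and non-degeneracy issues you flag as the ``hard part'' (the shiftability/Harnack bound uniform in $N$, and $G(+\infty)=0$) are precisely the steps the paper leaves implicit in its ``any weak accumulation point'' sentence, and they are resolvable along the lines you sketch.
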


\begin{proof}[Proof of Proposition~\ref{p:finite}]
The construction is much more straightforward. Namely, if the function $\hat \phi_+$ (and hence $\hat \phi_-$) is non-constant, then (as was done in Section~\ref{2dyn}) one can take 
$$
\hat \nu((-\infty,x])=\phi_+(x+0) \quad \forall x\in \bbR.
$$

In the other direction, assume that there exists a probability stationary measure $\hat \mu$ for the inverse dynamics. Then let us consider the function $\varphi(x):=\hat \mu((-\infty,x])$. The stationarity relation~\eqref{eq:stat} implies that 
$$
\varphi(x) = \hat \mu((-\infty,x]) =  \sum p_i (f_i^{-1})_* \varphi((-\infty, x]) =  \sum p_i \varphi(f_i(x)),
$$
hence the sequence $\varphi(F_n(x))$ forms a martingale. This martingale thus converges almost surely. Moreover, this martingale is bounded, hence the expectation of the limit is equal to its initial value. On the other hand, the only possible limit values are 0 and 1, as both upper and lower limit of the sequence of random iterations can be only $-\infty$ and $+\infty$ (see Lemma \ref{limits}). Hence, both probabilities of tending to $-\infty$ and to $+\infty$ are positive, and this concludes the proof. 
\end{proof}

\begin{rem} 
The arguments above actually show that for \emph{any} probability stationary measure $\mu$ for the inverse dynamics its partition function coincides with the probability $\phi_+$ that the point tends to $+\infty$ in the forward dynamics. The latter probability is well-defined, thus implying the uniqueness of the inverse stationary measure $\mu$. 

It is interesting to compare this argument to the one in the proof of \cite[Theorem 1]{CS}, as they are quite parallel. Indeed, the forward-dynamics probability that $F_n(a)>M$ (for a large fixed $M$) is the same as the probability of $F_n^{-1}(M)<a$; however, the authors of \cite{CS} use Birkhoff ergodic theorem instead of the martingale arguments to conclude.
\end{rem}

\begin{proof}[Proof of Proposition~\ref{p:semi}]
Assume first that $\phi_+\equiv 1$. Then, the random trajectory $F_n(x)$ of every initial point $x$ almost surely tends to $+\infty$, and thus the minimum $\min_n F_n(x)$ is almost surely finite.

Now, for every $y\in \bbR$ consider the probability 
\[
\psi_y(x):=\Pb(\exists n\ge 0 : \, F_n(x)<y) = \Pb(\min_{n\ge 0} F_n(x)<y).
\]
Note that for every $x>y$ it satisfies the full probability relation
\[
\psi_y(x)=\sum_i  p_i \psi_y (f_i(x)),
\]
while for $x<y$ it is identically equal to~$1$.

Consider now the measure $\hat \nu_y$, defined by 
\[
\hat \nu_y([x,+\infty)) = \psi_y(x).
\]
This measure satisfies the inverse dynamics stationarity relation on the subsets of~$(y,+\infty)$.

Now, normalize this measure so that the measure of $[0,+\infty)$ is equal to~$1$: take 
\[
\hat \mu_y := \frac{1}{\psi_y(0)} \hat \nu_y,
\]
and consider any weak accumulation point $\hat \mu$ of $\hat \mu_y$ as $y\to -\infty$. Any such limit point will be a stationary measure for the inverse dynamics, by construction finite on $[0,+\infty)$.

In the other direction, if there exists a semi-finite stationary measure $\hat \mu$ for the inverse dynamics, let us consider the function $\psi(x)=\hat\mu([x,+\infty))$. This function again leads to a positive martingale $\psi(F_n(x))$, that is now unbounded due to infiniteness of $\hat \mu$. 

However, a positive martingale still converges almost surely, and now the only its possible limit is~0 (as upper and lower limits of $F_n(x)$ can be only $+\infty$ or $-\infty$, and the function $\psi$ tends to infinity at $-\infty$). Hence, $\psi(F_n(x))$ converges to $0$ almost surely, and thus almost surely $F_n(x)\to+\infty$. Thus $\psi_+\equiv 1$.
\end{proof}


\begin{thebibliography}{0} 

\bibitem{AM} {\sc L. Alsed\`a, M. Misiurewicz}, Random interval homeomorphisms, \emph{Publ. Mat.} \textbf{58} (2014), 15--36.
 
 
\bibitem{BM} {\sc A. Bonifant and J. Milnor}, Schwarzian derivatives and cylinder maps, 
In: Holomorphic Dynamics and Renormalization, Fields Institute communications, v.~53, pp. 1--21. 
American Mathematical Soc., Providence, RI (2008).
 
\bibitem{BB}{\sc S.~Brofferio, D.~Buraczewski}, On unbounded invariant measures of stochastic dynamical systems, \emph{Ann. Probab.} Volume 43, Number 3 (2015), 1456-1492.

\bibitem{BBD}{\sc S.~Brofferio, D.~Buraczewski, E.~Damek}, On the invariant measure of the random difference equation $X_n = A_n X_{n-1} + B_n$ in the critical case, \emph{Ann. Inst. H. Poincaré Probab. Statist. 48 (2012)}, no. 2, 377--395. doi:10.1214/10-AIHP406.

\bibitem{Sar}{\sc S.~Brofferio, D.~Buraczewski, T.~Szarek}, On uniqueness of invariant measures for random walks on $\Homeo^+(\bbR$). arXiv:2008.01185v1

 
\bibitem{CzS} {\sc W. Czernous, T. Szarek} Generic invariant measures for iterated systems of interval homeomorphisms, \emph{Archiv der Mathematik} \textbf{114} (2020), pp.~445--455.

\bibitem{CS}{\sc K.~Czudek, T.~Szarek} Ergodicity and central limit theorem for random interval homeomorphisms, \emph{Isr. J. Math.} (2020). https://doi.org/10.1007/s11856-020-2046-4
 
\bibitem{DKNP} {\sc B.~Deroin, V.~Kleptsyn, A.~Navas, K.~Parwani}, Symmetric random walks on $Homeo^+(\bbR)$, \emph{Annals of Probability Vol. 41}, No. 3B (2013), 2066-2089
 
\bibitem{FUR}{\sc A.~Furman}, Random walks on groups and random transformations. \emph{Handbook of dynamical systems}, Vol. 1A, pp. 931--1014, North-Holland, Amsterdam, 2002.
 
 \bibitem{Homb} {\sc M.~Gharaei, A.~J.~Homburg}, Random interval diffeomorphisms, \emph{Discrete \& Continuous Dynamical Systems} - S, 2017, 10 (2) : 241-272. doi: 10.3934/dcdss.2017012
  
 \bibitem{GP}{\sc Y.~Guivarc'h, E.~Le Page}, Spectral gap properties for linear random walks and Pareto's asymptotics for affine stochastic recursions,  \emph{Ann. Inst. H. Poincaré Probab. Statist.
Volume 52}, Number 2 (2016), 503-574.

\bibitem{IKS} {\sc Yu. Ilyashenko, V. Kleptsyn, P. Saltykov},
Openness of the set of boundary preserving maps of an annulus with intermingled attracting basins,
\emph{Journal of Fixed Point Theory and Applications} \textbf{3} (2008), pp.~449--463

\bibitem{KAK}{\sc S.~Kakutani}, Random ergodic theorems and Markov processes with a stable distribution. \emph{Proceedings of the Second Berkeley Symposium on Mathematical Statistics and Probability}, 1950, University of California Press, Berkeley and Los Angeles (1951), pp. 247–261.

 \bibitem{Kan} {\sc I. Kan}, Open sets of diffeomorphisms having two attractors, each with an everywhere dense basin, \emph{Bull. Am. Math. Soc.}, \textbf{31} (1994), pp.68--74
 
 
 \bibitem{SZ} {\sc T.~Szarek, A.~Zdunik}, Attractors and invatiant measures for random interval homeomorphisms, unpublished manuscript.


\end{thebibliography}
\end{document}